\numberwithin{equation}{section}
\renewcommand{\vec}[1]{\ensuremath{\mathbf{#1}}}
\newcommand{\beq}{\begin{equation}}
\newcommand{\eeq}{\end{equation}}
\newcommand{\RR}{\ensuremath{\mathbb{R}}}
\newcommand{\bpm}{\begin{pmatrix}}
\newcommand{\epm}{\end{pmatrix}}
\newcommand{\dif}{\ensuremath{\mathrm{d}}}
\newcommand{\mi}{\ensuremath{\mathrm{i}}}
\newcommand{\pd}{\ensuremath{\partial}}
\newcommand{\PP}{\ensuremath{\mathcal{P}}}
\newcommand{\QP}{\ensuremath{\mathcal{Q}}}
\DeclareMathOperator{\dv}{div}
\DeclareMathOperator{\grad}{grad}
\DeclareMathOperator{\re}{Re}
\renewcommand{\Re}{\re}
\newcommand{\lam}{\ensuremath{\lambda}}
\newcommand{\mat}[1]{\ensuremath{\mathbf{#1}}}
\newcommand*{\defequals}{\mathrel{\vcenter{\baselineskip0.5ex \lineskiplimit0pt
                     \hbox{\scriptsize.}\hbox{\scriptsize.}}}%
                     =}
\newcommand\R{\mathbb R}
\newcommand\br{\begin{remark}}
\newcommand\er{\end{remark}}
\newcommand\bp{\begin{pmatrix}}
\newcommand\ep{\end{pmatrix}}
\newcommand{\be}{\begin{equation}}
\newcommand{\ee}{\end{equation}}
\newcommand\ba{\begin{equation}\begin{aligned}}
\newcommand\ea{\end{aligned}\end{equation}}
\newcommand{\bap}{\begin{app}}
\newcommand{\eap}{\end{app}}
\newcommand{\begs}{\begin{exams}}
\newcommand{\eegs}{\end{exams}}
\newcommand{\beg}{\begin{example}}
\newcommand{\eeg}{\end{exaplem}}
\newcommand{\bpr}{\begin{proposition}}
\newcommand{\epr}{\end{proposition}}
\newcommand{\bt}{\begin{theorem}}
\newcommand{\et}{\end{theorem}}
\newcommand{\bc}{\begin{corollary}}
\newcommand{\ec}{\end{corollary}}
\newcommand{\bl}{\begin{lemma}}
\newcommand{\el}{\end{lemma}}
\newcommand{\bd}{\begin{definition}}
\newcommand{\ed}{\end{definition}}
\newcommand{\brs}{\begin{remarks}}
\newcommand{\ers}{\end{remarks}}
\newcommand{\CC}{{\mathbb C}}
\newtheorem{theorem}{Theorem}[section]
\newtheorem{proposition}[theorem]{Proposition}
\newtheorem{corollary}[theorem]{Corollary}
\newtheorem{lemma}[theorem]{Lemma}
\theoremstyle{remark}
\newtheorem{remark}[theorem]{Remark}
\theoremstyle{definition}
\newtheorem{definition}[theorem]{Definition}
\newtheorem{example}[theorem]{Example}
\newcommand{\me}{\ensuremath{\mathrm{e}}}
\newcommand{\spm}{{\ensuremath{{\scriptscriptstyle\pm}}}}
\renewcommand{\sp}{{\ensuremath{{\scriptscriptstyle +}}}}
\newcommand{\sm}{{\ensuremath{{\scriptscriptstyle -}}}}
\title{Euler vs. Lagrange: The role of coordinates in practical Evans-function computations}
\author{Blake Barker}
\address{Department of Mathematics, Brigham Young University, Provo, UT 84602}
\email{blake@mathematics.byu.edu}
\thanks{B.B. was partially supported by NSF grant DMS-1400872.}
\author{Jeffrey Humpherys}
\address{Department of Mathematics, Brigham Young University, Provo, UT 84602}
\email{jeffh@math.byu.edu}
\thanks{J.H. was partially supported by NSF grant DMS-0847074}
\author{Gregory Lyng}
\address{Department of Mathematics, University of Wyoming, Laramie, WY 82071}
\email{glyng@uwyo.edu}
\thanks{G.L. was partially supported by NSF grants DMS-0845127 and DMS-1413273}
\author{Kevin Zumbrun}
\address{Department of Mathematics, Indiana University, Bloomington, IN 47405}
\email{kzumbrun@indiana.edu}
\thanks{K.Z. was partially supported by NSF grant DMS-0801745}
\date{Last Updated:  \today}
\begin{document}
\begin{abstract}
The Evans function has become a standard tool in the mathematical 
study of nonlinear wave stability.
In particular, computation of its zero set gives a convenient numerical method for determining the point spectrum of the associated linear operator (and thus the spectral stability of the wave in question).  We report on an unexpected complication that frustrates this computation for viscous shock profiles in gas dynamics. Although this phenomenon---related to the choice of Eulerian or Lagrangian coordinate system used to describe the gas---is present already in the one-dimensional setting, its implications are especially important in the multidimensional case where no computationally viable Lagrangian description of the gas is readily available. We introduce new ``pseudo-Lagrangian'' coordinates that allow us to overcome this difficulty, and we illustrate the utility of these coordinates in the setting of isentropic gas dynamics in two space dimensions.
\end{abstract}
\maketitle
\tableofcontents
\raggedbottom

\section{Introduction}\label{sec:intro}
\subsection{Overview}\label{ssec:overview}
The modern theory for the stability of nonlinear waves 
employs a combination of tools from functional analysis and from dynamical systems, and the Evans function is a key link between these two mathematical disciplines; see, e.g., \cites{AGJ,KP,S,Z1}. 
In this paper, we describe an unexpected obstacle to Evans-function computations for viscous profiles in gas dynamics.
This obstacle arises from the Eulerian coordinate system used to describe the motion of the gas. While the phenomenon arises even in a single space dimension, it has so far been missed due to the use by practitioners
of the somewhat simpler Lagrangian equations.
However, in multiple space dimensions, Lagrangian coordinates become impractical due to complexity/introduction of spurious modes
\cite{PYZ}, and the issue becomes central \cite{HLyZ3}. Thus, the resolution we describe here---a  set of ``pseudo-Lagrangian'' coordinates---appears to be a crucial component of any successful multidimensional Evans-function computations for viscous shocks in gas dynamics (and related models).

To begin, we briefly describe the abstract mathematical setting in the one-dimensional case. To that end, 
consider a system of conservation laws with viscosity in a single space dimension. This is a system of partial differential equations of the form
\beq\label{eq:claw}
U_t+F(U)_x = (B(U)U_x)_x\,.
\eeq
In system \eqref{eq:claw}, the unknown $U=U(x,t)$ is in $\RR^n$, the flux $F$ is a function from $\RR^n$ to itself, and the viscosity matrix $B$ is an $\RR^{n\times n}$-valued function on $\RR^n$. Our motivating example of such a system is the Navier--Stokes equations of gas dynamics; observe that both the Eulerian formulation \eqref{eq:ns} and the Lagrangian formulation \eqref{eq:Lag} have the form of equation \eqref{eq:claw}. A viscous shock profile is a traveling-wave solution of equation \eqref{eq:claw} connecting constant states $U_\spm$. That is, it is a solution of the form 
\beq\label{eq:tw}
U(x,t)=\bar U(x-st)\,,\quad \lim_{z\to\pm\infty}\bar U(z)=U_\spm\,.
\eeq
By shifting to a moving coordinate frame, we may assume that the speed $s$ is zero. Thus, the (now) standing-wave solution $\bar U(x)$ is a steady solution of equation \eqref{eq:claw}. To investigate the stability of this wave, we first linearize about it to obtain an equation that approximately describes the evolution of a small perturbation $V$:
\beq\label{eq:linearized}
V_t=LV\defequals(B(x)V_x)_x-(A(x)V)_x\,,
\eeq
where 
\[
B(x) \defequals B(\bar U(x))\,,
\quad\text{and}\quad 
A(x)V\defequals \dif F(\bar U(x))V -\dif B(\bar U(x))\big(V, \bar U'(x)\big)\,.
\]
The goal, then, is to determine the point spectrum of the variable coefficient (but asymptotically constant) operator $L$. To that end, we recast the eigenvalue problem $\lambda W=LW$ as a first-order system
\beq\label{eq:eval}
Z'=\mat{A}(x;\lambda)Z\,,
\eeq
where the prime denotes differentiation with respect to the spatial variable $x$, and $Z\in\CC^N$ (the size of $N$ depends on the structure of the system \eqref{eq:claw}). Since the point spectrum of $L$  in the unstable half plane is made up of those values $\lambda_*$ for which there is a nontrivial solution $Z(x;\lambda_*)$ of equation \eqref{eq:eval} which satisfies
\[
\lim_{x\to\pm\infty} Z(x;\lam_*)=0\,,
\]
these values can be detected by the vanishing of a Wronskian $D(\lam)$, known as the Evans function. More precisely, since $\bar U$ tends to constant states as $x\to\pm\infty$, there are limiting matrices
\[
\mat{A}_\spm(\lambda)\defequals\lim_{x\to\pm\infty}\mat{A}(x;\lambda)\,.
\]
For this introductory discussion, we suppose that for $\lam\in\{z\in\CC\,:\,\Re z>0\}$---the unstable half plane, the dimension of the stable subspace $S_\sp$ of $\mat{A}_\sp$ is $k$ and that the dimension of the unstable subspace $U_\sm$ of $\mat{A}_\sm$ is $N-k$. Then, the Evans function is constructed by building analytic (with respect to $\lam$) bases of solutions 
\[
\{z_1^\sp(x;\lam),\ldots,z_k^\sp(x;\lam)\}
\quad 
\text{and}
\quad
\{z_{k+1}^\sm(x;\lam),\ldots,z_N^\sm(x;\lam)\}
\]
spanning the manifolds of solutions of equation \eqref{eq:eval} that tend to zero at each spatial infinity. These bases are built by initializing at the spatial infinities with data from $S_\sp$ and $U_\sm$ and then integrating equation \eqref{eq:eval} toward $x=0$.
Then, the Evans function is defined to be
\beq
D(\lam)\defequals\det(z_1^\sp,\ldots,z_k^\sp,z_{k+1}^\sm,\ldots,z_N^\sm)|_{x=0}\,.
\eeq
It is evident from this construction that a zero of $D$ corresponds to the existence of a solution of equation \eqref{eq:eval} which decays at both spatial infinities, i.e., an eigenfunction. 

It follows that the computation of $D$ (and, in particular, its zero set) is a central component of the stability analysis. However, for even modestly complicated systems in a single space dimension, this is a task that must be done numerically. Fortunately, this is a computational problem that is by now well understood, and a variety of techniques and algorithms appear in the literature. Starting with a system of form \eqref{eq:eval}, the numerical approximation of $D$ essentially consists of two tasks. First, one must compute analytic bases of $S_\sp$ and $U_\sm$. Second, one must solve the differential equation \eqref{eq:eval} on sufficiently large intervals $[0,M_\sp]$ and $[-M_\sm,0]$. There is a kind of stiffness (when $k\neq 1$ and $N-k\neq1$) associated with this second problem due to the need to resolve modes of differing exponential decay (growth) rates in order to track the entire subspace of decaying (growing) solutions. A now standard solution to this problem is to work in the exterior product space so that the desired subspace appears as the single maximally stable (unstable) mode. An early example of this kind of numerical computation for solitary-wave solutions of a Boussinesq-type equation can be found in the paper of Alexander \& Sachs \cite{AS}. For viscous shock profiles, such as discussed above, the program of numerically approximating $D$ using exterior products was initiated and developed by Brin \cites{B_PHD,Br,BrZ}. Bridges and collaborators \cites{AB,BDG} independently rediscovered this method and clarified its relationship to the earlier compound-matrix method of Ng \& Reid \cites{NR1,NR2,NR3,NR4} for stiff ordinary differential equations. Two key later discoveries by Humpherys \& Zumbrun \cite{HuZ2} and by Humpherys, Sandstede, \& Zumbrun \cite{HSZ} helped open the door to large-scale Evans-function computations such as arise in complicated physical problems. The issue is that the exterior-product method, while elegant, does not scale well as $N$ grows. Humpherys \& Zumbrun \cite{HuZ2} proposed an ``analytic orthogonalization'' technique which allows for a much more efficient representation of the growing/decaying subspaces. In related work dealing with the other computational task, Humpherys, Sandstede, \& Zumbrun \cite{HSZ} proposed an efficient numerical algorithm, based on Kato's projection method \cite{Kato}, that is suitable for computing analytic bases of $S_\sp$ and $U_\sm$ when $k$ and $N-k$ are large. (In practice, it is typical that $k\sim N/2$.) More recent developments include alternative approaches to tackle the problem of large systems \cites{LMNT,LMT} and techniques for root-following as parameters vary \cite{HL}.

As the preceding discussion indicates, there is now a robust collection of numerical methods associated with approximating the Evans function. One culmination of this development is the \textsc{STABLAB} package \cite{STABLAB}, a \textsc{MATLAB}-based suite of routines that implements both the exterior-product method and the analytic-orthogonalization method (among other features). Using \textsc{STABLAB}, computational Evans-function techniques have been applied to gas dynamics in one space dimension \cites{BHRZ,BHLRZ,HLZ,HLyZ}, combustion in one space dimension \cites{HLyZ2,HHLZ,BHLZ1}, and magnetohydrodynamics in one space dimension \cite{BHZ}. A recent development is the use of rigorous numerical calculations to establish numerical proofs of spectral stability \cites{B,BZ}. This latter development is of particular interest since spectral stability---more precisely, a condition stated in terms of an Evans function which includes spectral stability---is known to imply nonlinear stablity for viscous shock profiles in a variety of hyperbolic-parabolic systems; see, e.g., \cites{MaZ1,MaZ2,Z1,Z2}. 


In this paper, we focus on a practical issue that arises in the computation of $D(\lambda)$ for physical systems like the Navier--Stokes equations (equations \eqref{eq:ns} or equations \eqref{eq:Lag}). The main message is a cautionary tale in that a natural coordinate system may not be the 
``best'' one. That is, while Eulerian coordinates are often used in the computational fluid dynamics community (for direct numerical simulations of the flow), \emph{we find that these coordinates lead to an Evans function that is practically incomputable} for intermediate frequencies and moderate shock strengths. In particular, we find that the output of the Eulerian Evans function varies dramatically, both in modulus and argument. Since stability calculations are usually done by winding number counts on the image of a semi-annular contour in the unstable complex half plane, rapid changes in modulus and argument lead to computations that are prohibitively complicated and expensive. In particular, this leaves physical models with many parameters and virtually any multidimensional problem out of reach. Thus, despite the existence of mature packages, i.e., \textsc{STABLAB}, for Evans-function computations, one cannot simply feed a coefficient matrix $\mat{A}$ into a package and ``hope for the best.''  

\subsection{Multidimensional formulation}\label{ssec:multid}
The Eulerian-coordinates-based obstacle 
is present in both one and several spatial dimensions. However, in a single space dimension, the issue can easily be sidestepped by working with the Lagrangian form of the equations. In multiple space dimensions, however, this maneuver is not available, and one must confront the issue head on.
Thus, although the main analysis of this paper takes place in a single space dimension, we now outline the general set-up for the multidimensional case as a preliminary to the calculations in \S\ref{sec:pseudo} where we illustrate the effectiveness of our pseudo-Lagrangian coordinates for two-dimensional isentropic gas dynamics. Indeed, we expect that our findings will be critical for Evans-based analysis of problems in multidimensional magnetohydrodynamics and detonation theory. 

Generalizing equation \eqref{eq:claw}, consider now a system of $n$ conservation laws with 
viscosity in $d$ space dimensions:
\beq\label{eq:claw2}
f^0(U)_t+\sum_{j=1}^d f^j(U)_{x_j}=\sum_{j,k=1}^d (B^{jk}(U)U_{x_k})_{x_j}\,.
\eeq
In equation \eqref{eq:claw2}, $x=(x_1,\ldots,x_d)\in\RR^d$, $t\in\RR$, and $U\in\RR^n$ with
\[
f^j:\RR^n\to\RR^n\,,j=0,1,\ldots,d\,;
\quad
B^{jk}:\RR^n\to\RR^{n\times n}\,,j,k=1,\ldots,d\,.
\]
We write $A^j(U)\defequals\dif f^j(U)$ for $j=0,1,\ldots,d$.

As above, our interest is in the stability of planar viscous shock profiles. 
Thus, we consider traveling-wave solutions of the form
\beq
U(x,t)=\bar U(x_1-st)\,,\; \lim_{z\to\pm\infty} \bar U(z)=U_\pm \,,
\eeq
and, without loss of generality, we assume $s=0$. 
Similarly as above, we
linearize about the steady solution $\bar U$ to obtain a linear equation for a small perturbation $V=V(x,t)$. That equation is
\beq\label{eq:meval}
A^0(x_1)V_t+\sum_{j=1}^d(A^j(x_1) V)_{x_j}=\sum_{j,k=1}^d(B^{jk}(x_1) V_{x_k})_{x_j}\,,
\eeq
where
\begin{align*}
A^0(x_1)&\defequals A^0(\bar U(x_1))\,,
\\
A^j(x_1)V&\defequals A^j(\bar U(x_1))V-\dif B^{j1}(\bar U(x_1))\big(V,\bar U'(x_1)\big)\,,
\\
B^{jk}(x_1)&\defequals B^{jk}(\bar U(x_1))\,.
\end{align*}
We take the Laplace transform in time (dual variable $\lambda$) and Fourier transform (dual variable $\xi=(\xi_2,\ldots,\xi_d)$) in the transverse spatial directions $(x_2,\dots,x_d)$, and we find
the generalized eigenvalue equation (supressing the dependence of the coefficients on $x_1$)
\begin{multline}\label{eq:mgeval}
\lambda A^0W+(A^1 W)' + \sum_{j=2}^d\mi\xi_j A^j W
= (B^{11} W')' + \sum_{k= 2}^d (\mi\xi_k B^{1k} W)' \\
+\sum_{j=2}^d\mi\xi_j B^{j1}W' - \sum_{j, k=2}^d\xi_j\xi_k  B^{jk} W\,.
\end{multline}
In equation \eqref{eq:mgeval}, $W=W(x_1,\lam, \xi)$ represents the transformed perturbation. 
As above, we reformulate the eigenvalue problem \eqref{eq:mgeval} as a first-order system of differential equations
\beq\label{eq:1order}
Z'=\mat{A}(x_1;\lam,\xi)Z\,.
\eeq
Here, $\mat{A}$ is an $N\times N$ matrix where the dimension $N$ depends on the structure of the system \eqref{eq:claw2}\footnote{We have omitted any mention of structural hypotheses on the system \eqref{eq:claw2}.}, and since $\bar U$ decays rapidly to its limiting values $U_\spm$ as $x_1\to\pm\infty$, then the coefficient matrix $\mat{A}$ also has constant (with respect to $x_1$) limiting values. We denote these by $\mat{A}_\spm(\lam,\xi)$. 

\begin{remark}[Flux Coordinates]
 A systematic way to formulate the first-order system \eqref{eq:1order} is to use one of the variations of flux coordinates \cite{BHLZ2}. 
 These coordinates confer concrete benefits for the numerical approximation of the Evans function, and are especially useful for multidimensional problems \cite{HLyZ3}.
 \end{remark}

Then, as above, the Evans function is built out of the subspaces of solutions of equation \eqref{eq:1order} which grow at $-\infty$ and decay at $+\infty$; the construction of these subspaces starts with an analysis of the constant-coefficient limiting system $Z'=\mat{A}_\spm(\lam,\xi)Z$.
That is, if the collection
$\{z_1^\sp,\ldots,z_k^\sp\}$ forms a basis for the solutions of equation \eqref{eq:1order} that decay at $+\infty$ and, similarly, $\{z_{k+1}^\sm,\ldots z_{N}^\sm\}$ spans the solutions that grow at $-\infty$, the Evans function can be written as 
\beq\label{eq:abstractevans}
D(\lam, \xi)\defequals\det(z_1^\sp,\ldots,z_k^\sp,z_{k+1}^\sm,\ldots z_{N}^\sm)|_{x_1=0}\,.
\eeq
Thus, $D$ is a function of frequencies,
\[
D:\{\lam\in\CC\,:\,\Re \lam>0\}\times \RR^{d-1}\to\CC\,,
\]
whose zeros correspond to eigenvalues, and the principal goal is to compute $D$ (or its zero set). 

\subsection{Outline}\label{ssec:outline}
In \S\ref{sec:coords} we recall the fundamentals of the Eulerian and Lagrangian coordinate systems for gas-dynamical models. For simplicity and concreteness, we carry out these calculations in one space dimension and in the setting of isentropic gas dynamics. Next, in \S\ref{sec:evans} we describe the two Evans functions arising from the pair of coordinate systems and illustrate their performance, again in the setting of one-dimensional isentropic gas dynamics. In \S\ref{sec:explanation} we describe the mathematical origin of the observed discrepancy in behavior between the Eulerian and Lagrangian Evans functions. We turn to the multidimensional case in \S\ref{sec:pseudo}, and we introduce there a ``pseudo-Lagrangian'' Evans function. This Evans function is based on Eulerian coordinates but shares the favorable properties of the one-dimensional Lagrangian Evans function. We illustrate the performance of this new Evans function by considering planar viscous shocks in two-dimensional isentropic gas dynamics.
Finally, in \S\ref{sec:conclusions}, we collect our findings and discuss their implications.

\section{Eulerian vs. Lagrangian coordinates}\label{sec:coords}

We recall that in continuum physics there are two distinct ways to describe the motion of a fluid. The Eulerian description assigns values to points in the physical domain;
thus, $\rho(x,t)$ is the density of the fluid particle that occupies the location $x$ at the instant $t$. 
The Lagrangian description is based on an initial labeling of all the fluid particles at some initial instant and tracking them as the fluid moves. 
Thus, $\tau(y,t)\defequals\rho(y,t)^{-1}$ represents the specific volume at the instant $t$ of the fluid particle 
marked with the label $y$. 
We begin by reviewing the Eulerian and Lagrangian descriptions of viscous shocks.

\subsection{Eulerian coordinates}
The one-dimensional isentropic Navier--Stokes equations in Eulerian coordinates are
\begin{subequations}\label{eq:ns}
\beq
\rho_t+(\rho u)_x = 0\,,
\eeq
\beq
(\rho u)_t+(\rho u^2+p(\rho))_x=u_{xx}\,,
\eeq
\end{subequations}
where we have, without loss of generality, set the coefficient of viscosity to be $1$. 
For definiteness, we assume a polytropic, or ``$\gamma$-law,'' pressure law
\begin{equation}\label{eq:pressure}
p(\rho)=a\rho^\gamma, \qquad a, \gamma >0\,.
\end{equation}
This is not important for our main conclusions, but this assumption streamlines and simplifies the surrounding discussion.

As noted above, a viscous shock is an asymptotically constant traveling-wave solution of equation \eqref{eq:ns}. That  is, it is a solution of form
\[
\rho(x,t)=\bar \rho (x-\sigma t)\,,
\quad
u(x,t)=\bar u(x-\sigma t)
\]
connecting constant states $(\rho_\spm,u_\spm)$. That is, the viscous shock satisfies
\[
\lim_{z\to\pm\infty} (\bar\rho (z), \bar u (z)) = (\rho_\spm, u_\spm)\,.
\]
Due to Galilean invariance, without loss of generality, we may assume that the traveling wave of interest is stationary. That is, the wave speed $\sigma$ is zero. 
This reduces the traveling-wave equation to the the time-independent part of equation \eqref{eq:ns}, namely (dropping bars and using prime to denote differentiation with respect to $x$)
\begin{equation}\label{eq:nstw}
(\rho u)' = 0\,,\quad (\rho u^2+p(\rho))'=u''\,. 
\end{equation}
Integrating equation \eqref{eq:nstw} from $-\infty$ to $+\infty$, we obtain the Rankine-Hugoniot jump conditions
\begin{align}\label{erh}
[\rho u] & = 0\,, \qquad
[\rho u^2+p(\rho)]=0\,,
\end{align}
where $[\cdot]$ denotes difference between limits at $+\infty$ and $-\infty$.
It is straightforward to verify that, for a $\gamma$-law gas,
for each pair of endstates $(\rho_\spm, u_\spm)$ obeying equation \eqref{erh}, there exists a
unique heteroclinic connection corresponding to a traveling wave.
More, for each choice of momentum flux $m\defequals\rho_\spm u_\spm$, it can be seen that there is a unique solution of equation \eqref{erh}, hence
a unique associated stationary shock.

\subsection{Lagrangian coordinates}
To convert to Lagrangian coordinates, we set
$$
y(x,t)=\int_{x^*(t)}^x \rho(z,t)\,\dif z
$$
with 
$x^*(0)=0$, $\frac{\dif x^*}{\dif t}=u(x^*(t),t)$.
Then, we observe that 
\beq\label{eq:dydx}
\frac{\pd y}{\pd x}(x,t)=\rho(x,t)\,,
\eeq
and
\begin{align}
\frac{\pd y}{\pd t}(x,t) & = \int_{x^*(t)}^x \frac{\pd \rho}{\pd t}(z,t)\,\dif z -\rho(x^*(t),t)\frac{\dif x^*}{\dif t} \nonumber\\
	&=-\int_{x^*(t)}^x\pd_z(\rho u)\,\dif z -\rho(x^*(t),t)u(x^*(t),t) \nonumber \\
	&=-\rho(x,t)u(x,t)+\rho(x^*(t),t)u(x^*(t),t)-\rho(x^*(t),t)u(x^*(t),t) \nonumber\\
	&=-\rho u(x,t)\,.\label{eq:dydt}
\end{align}
Thus, defining
\beq
\tau(y(x,t),t)=\frac{1}{\rho(x,t)}\,,
\quad
w(y(x,t),t)=u(x,t),\
\eeq
and denoting by $P$ the pressure as a function of specific volume, we find---using equations \eqref{eq:dydx} and \eqref{eq:dydt}---that 
the Lagrangian formulation of system \eqref{eq:ns} is
\begin{subequations}\label{eq:Lag}
\beq
\tau_t  -w_y=0\,,
\eeq
\beq
w_t + P(\tau)_y= \Big(\frac{w_y}{\tau}\Big)_y.
\eeq
\end{subequations}

\br\label{indrmk}
Note that this change of coordinates involves both dependent and independent variables;
see, e.g., Courant \& Friedrichs \cite{CF} or Serre \cite{S} for further details.
\er

From equation \eqref{eq:Lag}, the traveling-wave equation for a traveling-wave solution of form 
\[
\tau(y,t) =\bar\tau(y-st)\,, 
\quad
w(x,t)= \bar w(y-st)
\]
with 
$\lim_{\zeta\to\pm\infty}(\bar\tau(\zeta),\bar w(\zeta))=(\tau_\spm,w_\spm)$
is thus
\begin{equation}\label{eq:lagtw}
	-s\tau' = w'\,,\quad -sw'+ P(\tau)'= (w'/\tau)'\,. 
\end{equation}
Here, $'$ denotes differentiation with respect to $\zeta\defequals y-st$. 
Integrating from $-\infty$ to $+\infty$, we obtain the Lagrangian version of the Rankine-Hugoniot conditions
\eqref{erh}:
\begin{align}\label{lrh}
-s[\tau]-[w] & =0 \,, 
\quad 
-s[w]+[P(\tau)]  =0 \,.
\end{align}
Using 
\(
\rho_\sp u_\sp=\rho_\sm u_\sm =m,
\)
we may rewrite the jump condition as 
\beq
m[u]=-[p]=-[P]=-s[w]\,,
\eeq
whence $m=-s$.  This relation is useful in comparing Eulerian versus Lagrangian shock parametrizations 
without appealing to the full coordinate transformation.

\section{Evans functions and their performance}\label{sec:evans}
We now construct the Evans function in Eulerian and Lagrangian coordinates following
\cite{BHLZ2}, and we compare their respective performances.
Using the invariances of $\gamma$-law gas dynamics \cite{HLZ},
we take without loss of generality $m=-s=1$ and $\rho_\sm=1$ in what follows, parametrizing the strength
of the shock by $u_\sp$ ($\tau_\sp$) in the Eulerian (Lagrangian) case, where---as above---$\pm$ subscripts denote limits
at $\pm\infty$ of corresponding coordinates.

\subsection{Eulerian case}
Linearizing equation \eqref{eq:ns} about a steady profile $(\bar \rho, \bar u)$, we obtain the eigenvalue problem
\begin{subequations}\label{evaleuler}
\beq
\lambda \rho + (\bar \rho u+\rho \bar u)'= 0\,,
\eeq
\beq
\lambda(\bar \rho u + \rho \bar u) + (\rho \bar u^2+2u+p'(\bar \rho) \rho)' = u''\,.
\eeq
\end{subequations}
Defining 
\begin{equation}
\beta\defequals \frac{\bar u^2 + p'(\bar \rho)}{\bar u}\quad \textrm{and} \ 
f\defequals \begin{pmatrix} -\rho \bar u-\bar \rho u \\ u'-2u-\beta\bar u \rho \end{pmatrix}
\notag
\end{equation}
we may rewrite the eigenvalue problem as the first order system
\begin{equation}
\begin{pmatrix} f\\ u\end{pmatrix}' = 
\begin{pmatrix} -\lambda /\bar u &0 & -\lambda \bar \rho/\bar u \\ -\lambda & 0 & 0\\ -\beta & 1 & 2-\beta \bar \rho \end{pmatrix}\begin{pmatrix} f\\u\end{pmatrix},
\label{eq:evans_ode_eulerian}
\end{equation}
or, briefly, 
\be\label{eW}
\frac{\dif}{\dif x} W=\mat{A}(x;\lambda)W\,,
\quad 
\text{where}\quad 
W=\bp f,u\ep\,.
\ee

Eigenvalues of equation \eqref{evaleuler} correspond to values of $\lambda$ for which there exist solutions of
equation \eqref{eW} decaying as $x\to \pm \infty$, that is, a nontrivial intersection of the manifolds of solutions
decaying at $\pm \infty$.  By standard asymptotic results from ordinary differential equations (ODEs)---e.g., the ``gap Lemma'' of \cite{GZ})---one finds
that these manifolds are spanned by bases $\{W_1,W_2\}$ and $\{W_3\}$
asymptotic to eigenmodes $\me^{\mu_j x}V_j$ of the stable (unstable) subspaces of the limiting coefficient matrices 
$\mat{A}_\spm\defequals \mat{A}(\pm \infty;\lambda)$, where $\mu_j, V_j$ depend on $\lambda$.
The Evans function associated with equation \eqref{evaleuler} is then defined as
\begin{equation}\label{evanseuler}
	D_\mathrm{E}(\lambda)\defequals\det (W_1,W_2,W_3)|_{x=0}.
\end{equation}
Here, an important detail is the specification of the ``initializing bases at $\pm \infty$'' 
$V_j$; these are defined as solutions of Kato's ODE \cite{Kato}
\be\label{Katode}
\dif R/\dif \lambda =(\dif\PP/\dif \lambda) R,
\ee
where $\PP(\lambda)$ is the (uniquely determined)
projection onto the stable (unstable) subspace of $\mat{A}_\pm(\lambda)$, and $R$ is a matrix whose columns form
the bases $V_j$. 

This determines the Evans function uniquely up to a constant factor, which is then normalized by 
setting $D_\mathrm{E}(\lambda_*)=1$ at some convenient initializing frequency $\lambda_*$ (typically 
the maximum real value of frequencies under consideration).
It may be checked that the above definition makes sense, i.e., the counts of stable/unstable basis elements
are correct, on the unstable region $\Re \lambda\geq 0$, $\lambda\neq 0$, 
where dimensions of stable/unstable subspaces of $\mat{A}_\spm$ agree.

\subsection{Lagrangian case}
The eigenvalue equation in Lagrangian coordinates is
\begin{equation}
	\begin{split}
		\lambda \tau + \tau'-u'&= 0\\
		\lambda u + u' -(P'(\bar \tau) \tau)'&= \left( \frac{u'}{\bar \tau}-\frac{\bar u ' \tau}{\bar \tau^2}\right)',
	\end{split}
\end{equation}
where $\mathsf P'(\bar \tau) = a\gamma \bar \tau^{-\gamma-1} $. This may evidently be written as the first order system 
\begin{equation}
\begin{pmatrix} \tau\\ u\\ u' \end{pmatrix}'= \begin{pmatrix} -\lambda & 0 & 1\\ 0 & 0 & 1\\ \lambda \alpha \bar \tau  & \lambda \bar \tau & \bar \tau(1-\alpha) \end{pmatrix} \begin{pmatrix} \tau\\ u\\ u' \end{pmatrix},
\label{eq:evans_ode_lagrange}
\end{equation}
or 
\be\label{eL}
\frac{\dif }{\dif y}Z=\mat{B}(y;\lambda)Z\,,
\quad
\text{where}
\quad
Z=\bp \tau& u & u'\ep\,,
\ee
where $\alpha\defequals P'(\bar \tau) - \frac{\bar u'}{\bar \tau^2}$; equivalently one may follow the more complicated, but in this case unnecessary, prescription of \cite{BHLZ2}.
The Lagrangian Evans function $D_\mathrm{L}(\lambda)$ is then defined, similarly as in the Eulerian case, as
\begin{equation}\label{evanslag}
	D_\mathrm{L}(\lambda)\defequals\det (Z_1,Z_2,Z_3)|_{y=0},
\end{equation}
where the stable (unstable) manifolds of the flow of equation \eqref{eL} at $+\infty$ ($-\infty$)
are spanned by bases $\{Z_1,Z_2\}$ and $\{Z_3\}$
asymptotic to eigenmodes $\me^{\nu_j y}U_j$ of the stable (unstable) subspaces of the limiting coefficient matrices 
$\mat{B}_\pm\defequals\mat{B}(\pm \infty;\lambda)$, with $U_j$ prescribed via Kato's ODE
\be\label{Katode2}
\dif S/\dif \lambda =(\dif \QP/\dif \lambda) S,
\ee
where $\QP(\lambda)$ is the (uniquely determined)
projection onto the stable (unstable) subspace of $\mat{B}_\spm(\lambda)$, and $S$ is a matrix whose columns form
the bases $U_j$. 
Again, the above prescription is well-defined on the unstable region $\Re \lambda \geq 0$, $\lambda \neq 0$.

%

\subsection{Numerical performance}\label{ssec:numerical-performance}
Despite the apparent similarity of Evans functions $D_\mathrm{E}$ and $D_\mathrm{L}$, their performance is quite different for practical 
computations.  These computations typically consist of winding number computations on the image under the Evans function of a semi-annular
contour determined (by energy estimates or auxiliary asymptotic ODE estimates) to contain all possible unstable eigenvalues
of the linearized operator about the wave.
A winding number of zero thus corresponds to spectral stability while a nonzero winding number signals the presence of unstable eigenvalues and therefore instability.

In Figure \ref{fig:eulerian_evans1}, we plot a
traveling-wave solution of equation \eqref{eq:ns} and the Evans function, evaluated on a semi-annulus (see Figure \ref{fig:compare}(b)) with inner radius $r=10^{-3}$ and outer radius $R =(1/2+\sqrt{\gamma})^2$, as computed with the Eulerian coordinates formulation given in equations 
\eqref{eq:evans_ode_eulerian}, \eqref{eW}.  The Evans function maps contours of the form shown in Figure \ref{fig:compare}(b) to contours of the form shown in Figure \ref{fig:compare}(c). To compute the Evans function, we use the method of continuous orthogonalization described in \cite{HuZ2}. All computations are carried out in \textsc{STABLAB} \cite{STABLAB}.  We note that in Eulerian coordinates, the Evans function contour wraps around the origin 10 times before unwrapping to yield winding number zero. Further, the Evans function varies over 12 orders of magnitude (from 1 to approximately 2.8e12). This is in stark contrast to the Evans function in Lagrangian coordinates, which is bounded away from the origin and remains order one in modulus (varying from 1 to about 0.2). In Figure \ref{fig:compare}(a)--(d), we plot the profile solution to equation \eqref{eq:Lag} and the Evans function, evaluated on a semi-annulus with inner radius $r=10^{-3}$ and outer radius $R =(1/2+\sqrt{\gamma})^2$, as computed with the Lagrangian coordinates formulation given in equation \eqref{eq:evans_ode_lagrange}. One can see by mere observation that the contour featured in Figures \ref{fig:compare} (c) and (d) has a winding number of zero.

\begin{figure}[ht!]
 \begin{center}
$
\begin{array}{lr}
\text{(a)} \includegraphics[scale=0.3]{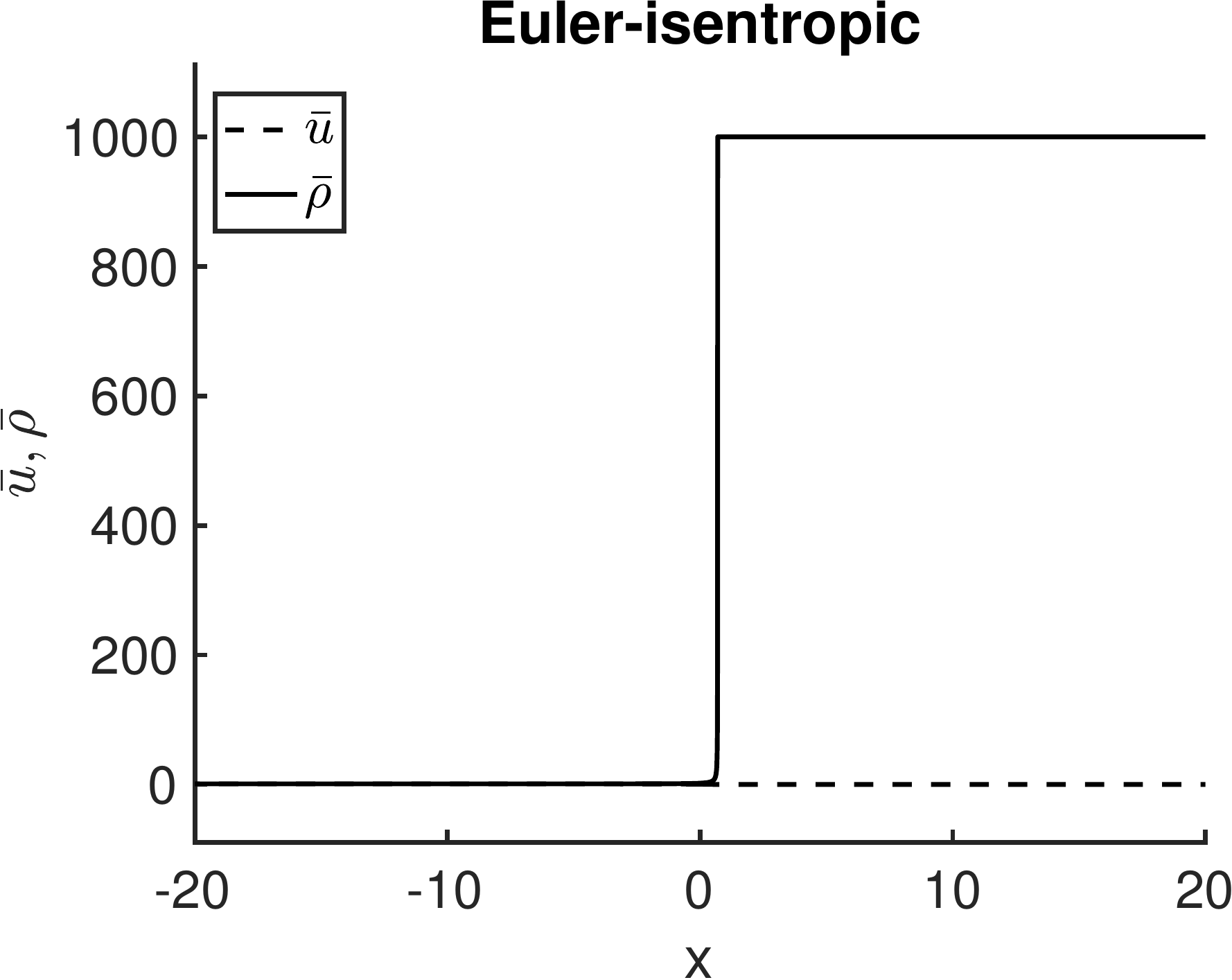} & \text{(b)} \includegraphics[scale=0.3]{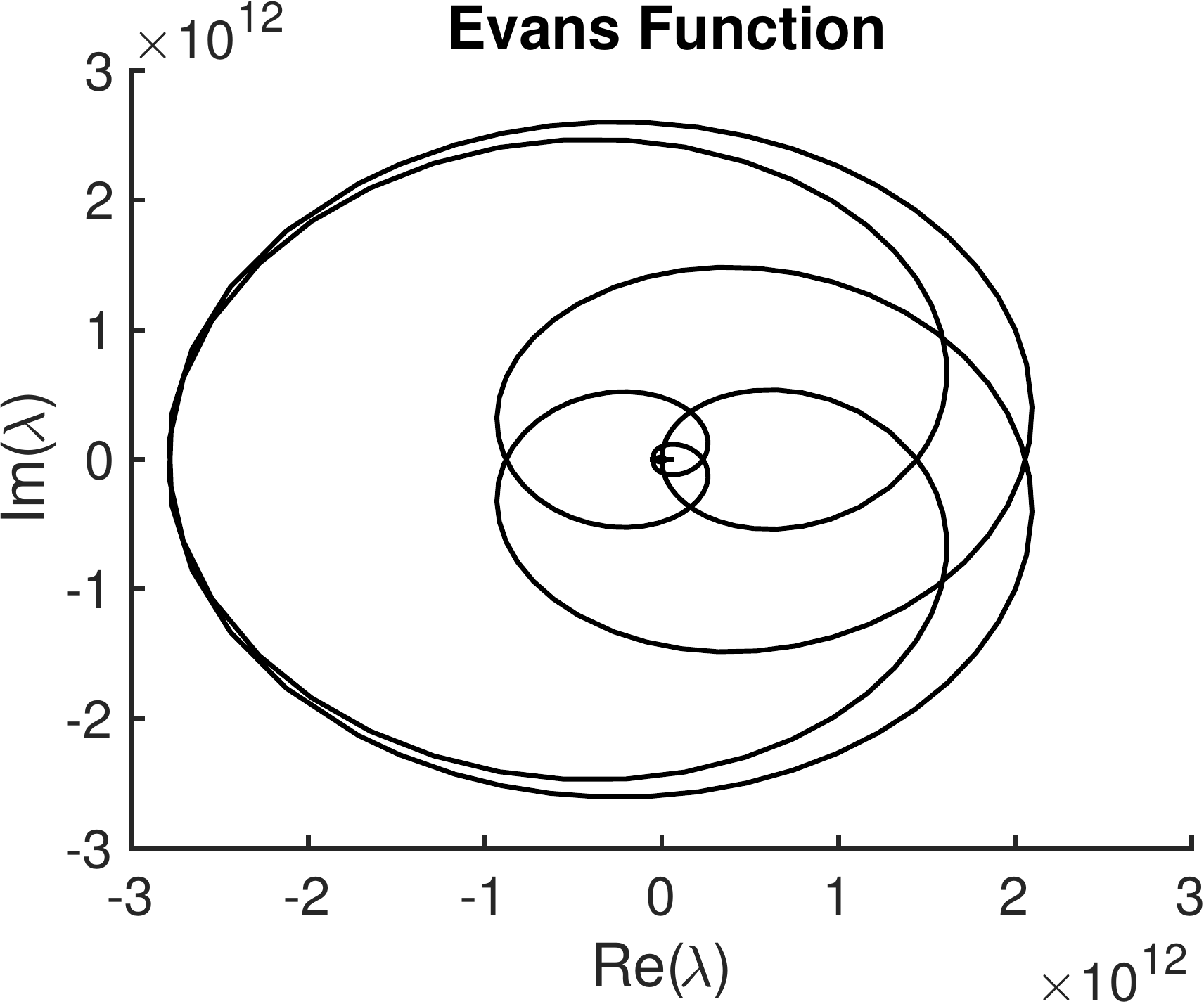} \\
\text{(c)} \includegraphics[scale=0.3]{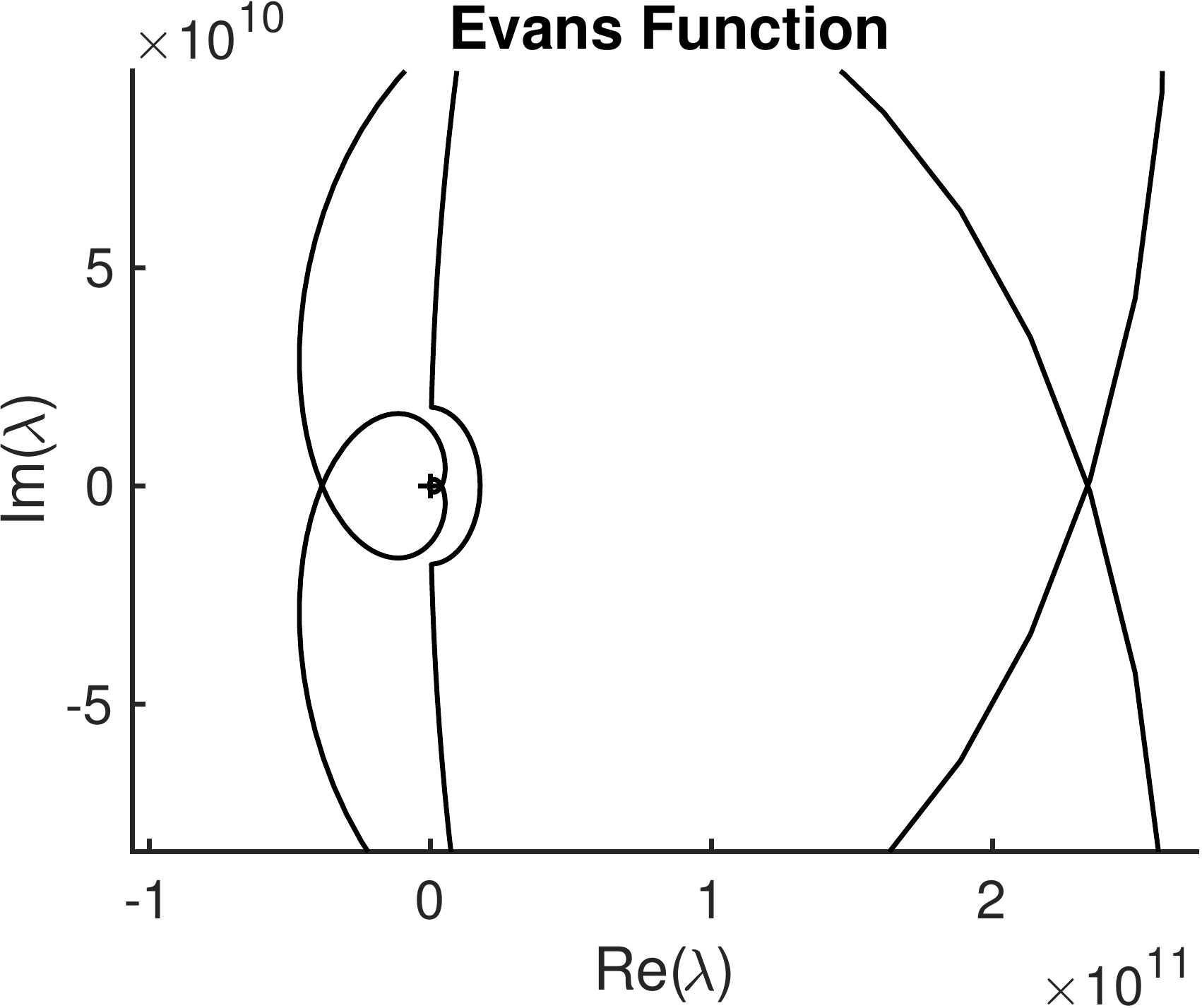} & \text{(d)} \includegraphics[scale=0.3]{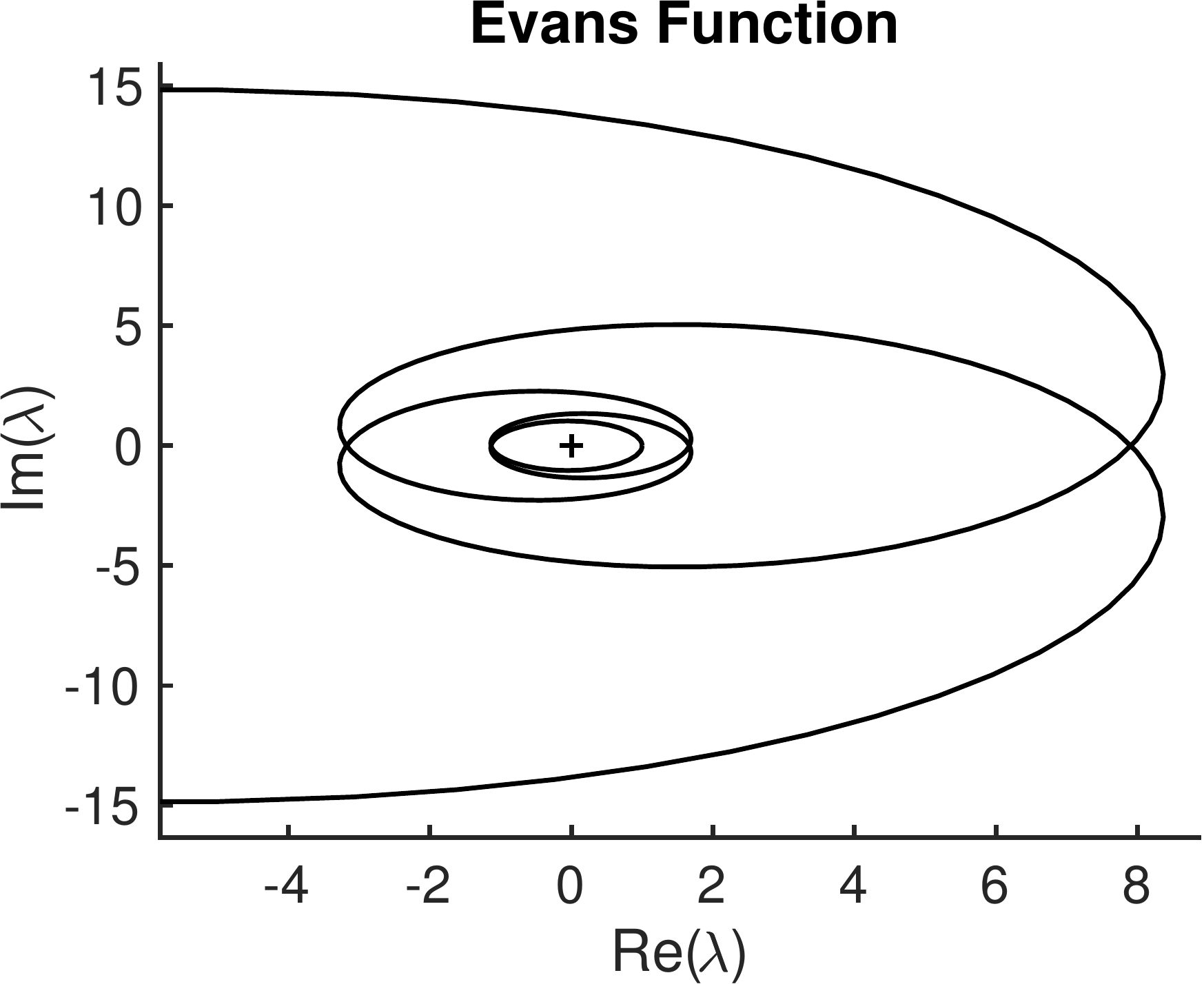}
\end{array}
$
\end{center}
\caption{Plot of the profile and Evans function for one-dimensional isentropic gas in Eulerian coordinates when $\gamma = 5/3$ and $u_\sp = 0.001$. (a) Traveling wave profile. (b) Evans function evaluated on a semi-annulus contour with inner radius $r=10^{-3}$ and outer radius $R =(1/2+\sqrt{\gamma})^2$. (c) Zoom in of Figure (b). (d) Zoom in of Figure (c). Throughout a + marks the origin.}
\label{fig:eulerian_evans1}
\end{figure}

\begin{figure}[ht!]
 \begin{center}
$
\begin{array}{lcr}
\text{(a)} \includegraphics[scale=0.3]{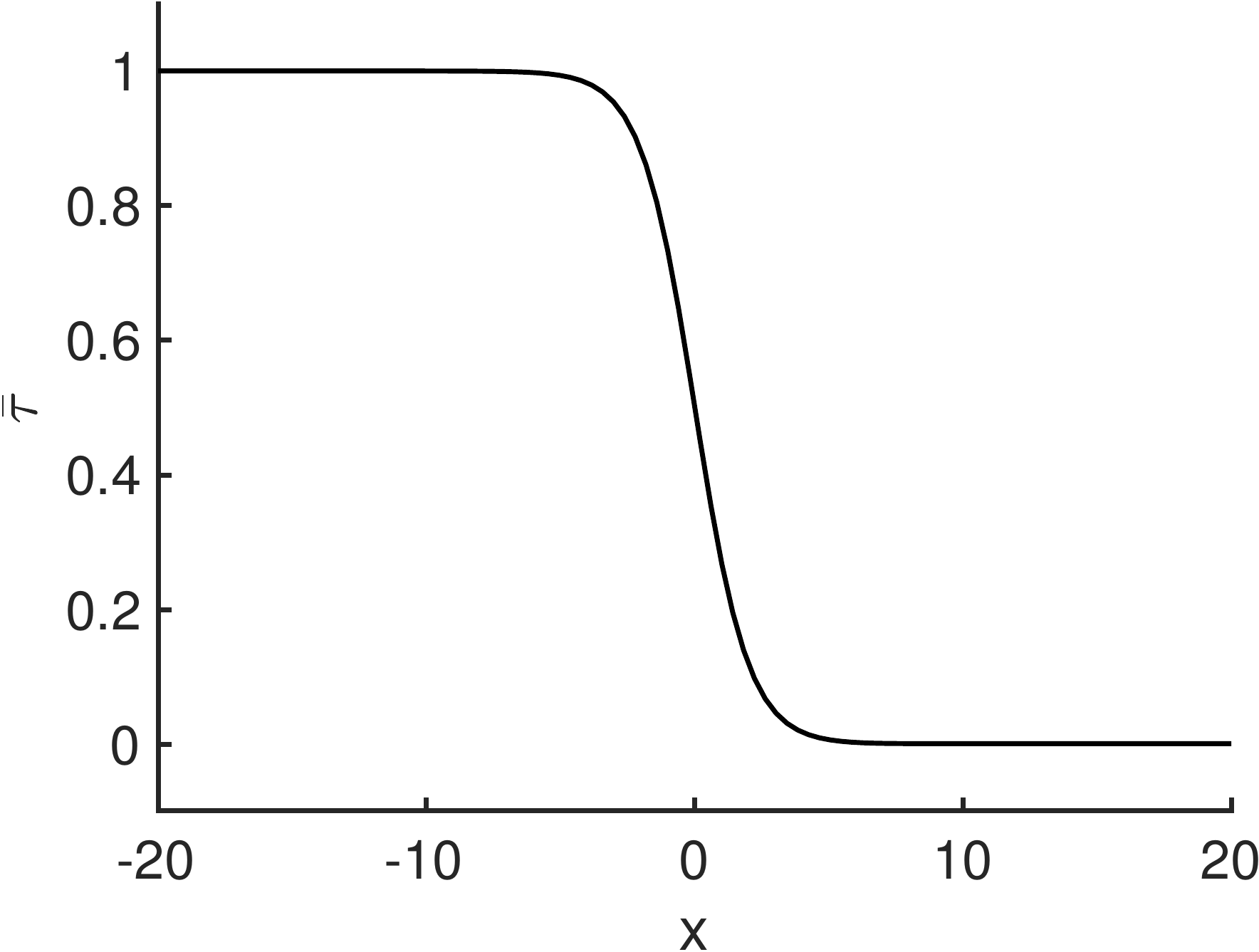}& \text{(b)} \includegraphics[scale=0.3]{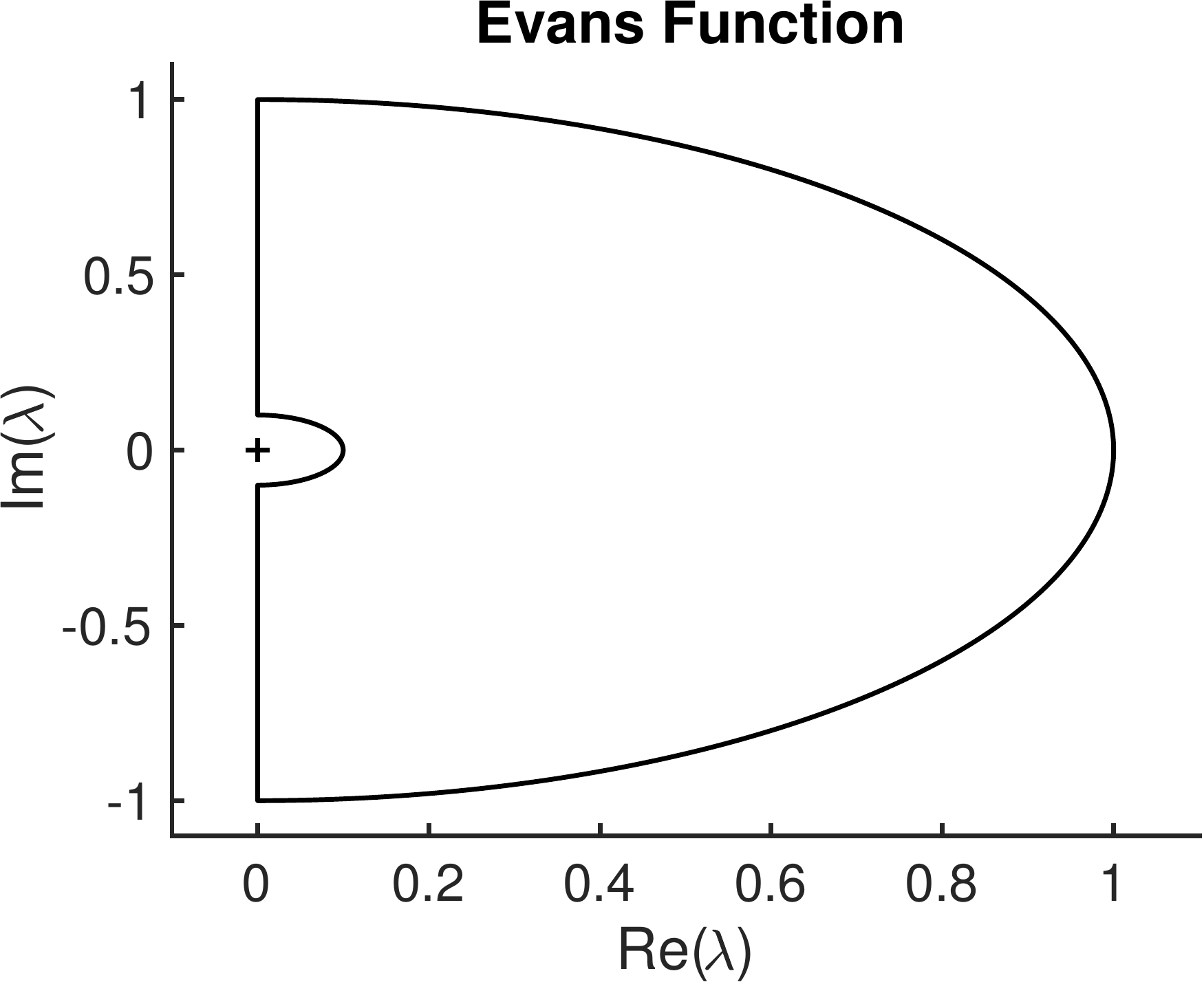} \\ 
\text{(c)}  \includegraphics[scale=0.3]{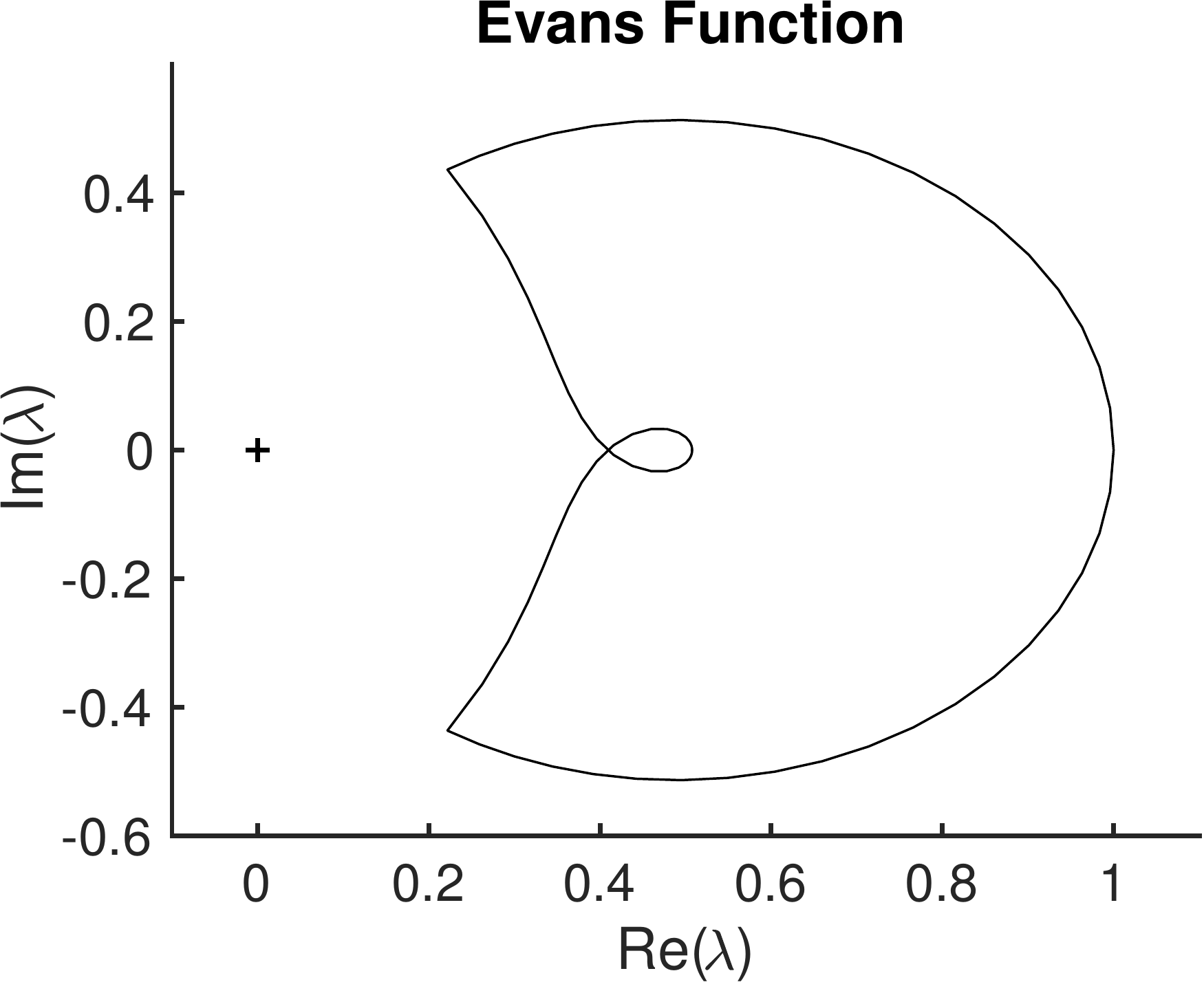} & \text{(d)} \includegraphics[scale=0.3]{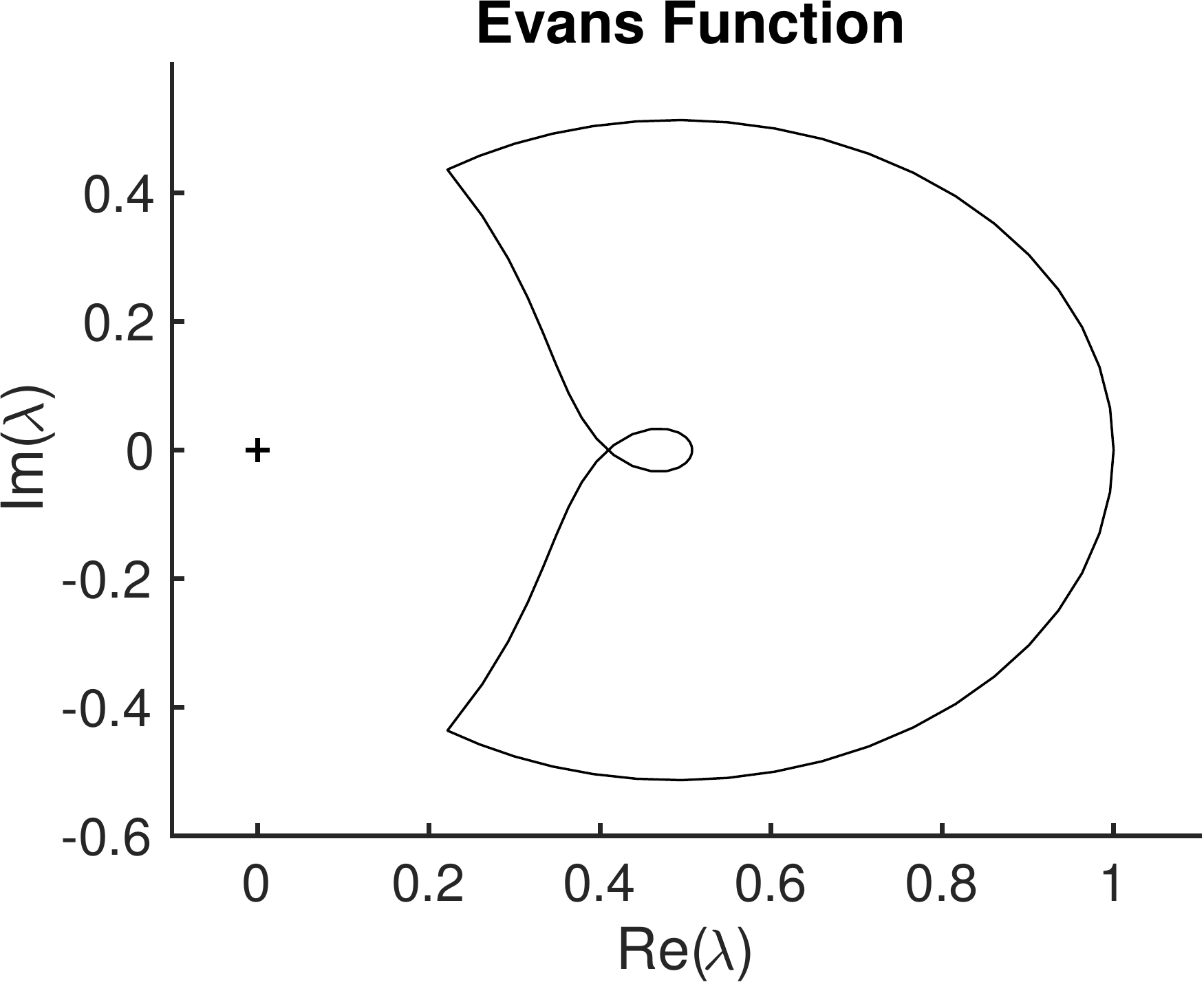}
\end{array}
$
\end{center}
\caption{Plot of the profile and Evans function for one-dimensional isentropic gas in Lagrangian and pseudo-Lagrangian coordinates when $\gamma = 5/3$ and $u_\sp = 0.001$. (a) Profile in Lagrangian coordinates. (b) Example of the type of semi-annulus contour on which we compute the Evans function. (c) Evans function in Lagrangian coordinates. (d) Evans function in pseudo-Lagrangian coordinates. Throughout, a + marks the origin. }
\label{fig:compare}
\end{figure}

To summarize, in comparison with the Lagrangian Evans function, the Eulerian Evans function exhibits excessive winding. This makes spectral computations prohibitively complicated and expensive in the Eulerian formulation (as noted earlier, a serious problem in the multidimensional case).

\section{Explanation of observed results}\label{sec:explanation}
We now investigate the origins of the discrepancy between the Eulerian and Lagrangian Evans functions.
Evidently, the flows of the Evans systems \eqref{eq:evans_ode_eulerian} and 
\eqref{eq:evans_ode_lagrange} are conjugate, hence, noticing that we have normalized so that $y(0)=0$, 
{\it up to the initialization at $\pm \infty$}, we observe that the two Evans functions should agree up to a nonzero constant factor
equal to the determinant at $x=y=0$ of the $\lambda$-independent coordinate transformation between 
$\begin{pmatrix} f,u\end{pmatrix} $ and $\begin{pmatrix} \tau,u,u'\end{pmatrix} $.
	Thus, the discrepancy can only originate from two sources: 
\begin{itemize}
\item[(i)] the prescription of $V_j(\lambda)$ via
	Kato's ODE, or 
\item[(ii)] the asymptotic prescription $W_j\sim \me^{\mu_j(\lambda)}V_j(\lambda)$ as $x\to \pm \infty$.
\end{itemize}

\subsection{The conjugating transformations}
The relationship between dependent coordinates is given, linearizing the relation $\rho=\tau^{-1}$, by
the pair of transformations
$$
\begin{pmatrix}
\rho \\
u \\
u'
\end{pmatrix}
=
\begin{pmatrix}
-\frac{1}{\bar\tau^2} & 0 & 0 \\
0 & 1 & 0 \\
0 & 0 & 1
\end{pmatrix}
\begin{pmatrix}
\tau \\
w \\
w'
\end{pmatrix}
$$
and
$$
\begin{pmatrix}
f \\
u
\end{pmatrix}
\defequals
\begin{pmatrix} -\bar u \rho -\bar \rho u \\ u'-2u-\beta\bar u \rho\\u \end{pmatrix}
=
\begin{pmatrix}
-\bar u & -\bar \rho & 0 \\
-\beta \bar u & -2 & 1 \\
0 & 1 & 0
\end{pmatrix}
\begin{pmatrix}
\rho \\
u \\
u'
\end{pmatrix},
$$
the composition of which gives a {\it $\lambda$-independent conjuagator $T(x)$} such that 
\be\label{depcon}
\begin{pmatrix} f \\ u \end{pmatrix}=T(x) \begin{pmatrix} \tau \\ w \\ w' \end{pmatrix}.
	\ee
	The relation between independent variables is likewise \emph{$\lambda$-independent}, given 
	(see \eqref{eq:dydx}) by
	\be\label{yx}
	\dif y/\dif x=\bar\rho(x).
	\ee
	Combining these two observations, the relation between equations \eqref{eW} and \eqref{eL} is thus
	\be\label{AB}
	\mat{B}(y(x);\lambda)= \bar \rho(x)^{-1} T(x)^{-1}\mat{A}(x;\lambda)T(x).
	\ee

\subsection{Invariance of Kato's equation}
Having observed the $\lambda$-independence of the conjugating transformations, we may eliminate the 
possibility (i) as a source of discrepancy between the two Evans function formulations based on the
following general result.

\bl\label{katolem}
Kato's ODE is invariant under $\lambda$-independent coordinate changes.
\el

\begin{proof}
	Focusing on either $x=+\infty$ or $x=-\infty$, it is sufficient by equation \eqref{AB} 
	to consider constant coefficient matrices
$\mat{A}$, $\mat{B}$, related by $\mat{B}=\rho^{-1} T\mat{A}T^{-1}$, $\QP=T\PP T^{-1}$, where
$\PP$ and $\QP$ are projections onto the stable (unstable) subspaces of $\mat{A}$ and $\mat{B}$,
with $\rho\in \R$ and $T\in \R^{3\times 3}$
constant.
Then, the Kato ODEs for the two systems are 
\be\label{P}
\dif R/\dif \lambda=(\dif \PP/\dif\lambda) R
\ee
and
\be\label{Q}
\dif S/\dif \lambda=(\dif\QP/\dif\lambda)S,
\ee
and the claim is that $S\defequals TR$ is a solution of equation \eqref{Q}
if and only if $R$ is a solution of equation \eqref{P}.
Computing
$ S'=TR'=T\PP'R=T\PP'T^{-1}S=\QP'S$, we are done.
\end{proof}

\subsection{Asymptotic prescription of basis elements}
Having eliminated possibility (i), we now explicitly relate the Eulerian and Lagrangian Evans functions by examination
of (ii).
On the unstable region $\Re \lambda \geq 0$, $\lambda\neq 0$ where our prescriptions of the Evans functions
are well-defined, let $\nu_\sp$ denote the sum of the stable (negative real part) 
eigenvalues of $\mat{A}_\sp$ and $\nu_\sm$ the sum of the unstable (positive real part) eigenvalues of $\mat{A}_\sm$.
Define constants
\be\label{Deltas} 
\Delta_\sp\defequals \int_0^{+\infty} (\bar\rho(x)-\rho_\sp)\,\dif x
\quad
\text{and}
\quad
\Delta_\sm\defequals\int_{-\infty}^0 (\bar\rho(x)-\rho_\sm)\,\dif x\,.
\ee

\begin{lemma}\label{evrel}
For $T$ as in equation \eqref{depcon}, 
the Eulerian and Lagrangian Evans functions are related by 
\be\label{phi}
D_\mathrm{E}(\lambda)=
\det T(0) \me^{\nu_\sp\Delta_\sp -\nu_\sm\Delta_\sm}D_\mathrm{L}(\lambda),
\ee
where, for $m=-s=1$,  
\[
\nu_\sp\Delta_\sp -\nu_\sm\Delta_\sm = -\lambda \Delta_\sp +O(\lambda^{1/2})
\quad 
\text{as}
\quad 
|\lambda|\to\infty\,.
\]
\end{lemma}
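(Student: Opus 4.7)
\textbf{Proof plan for Lemma \ref{evrel}.}

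The plan is to build Eulerian basis elements out of Lagrangian ones by pushing forward through the conjugating transformations, and to track the (scalar, $\lambda$-dependent) factor by which the result differs from the Kato-normalized Eulerian basis. Concretely, given a Kato-normalized Lagrangian basis element $Z_j(y;\lambda)$ (for $j=1,2$ spanning the decaying manifold at $+\infty$, and $j=3$ spanning the decaying manifold at $-\infty$), set
\[
\widetilde W_j(x;\lambda)\defequals T(x)\,Z_j(y(x);\lambda).
\]
Using the conjugation identity \eqref{AB} and the chain rule applied through $\dif y/\dif x=\bar\rho(x)$, one checks directly that $\widetilde W_j$ solves the Eulerian ODE \eqref{eW} and inherits analyticity in $\lambda$ plus decay at the appropriate spatial infinity from $Z_j$. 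So both $\widetilde W_j$ and the Kato-normalized Eulerian $W_j^E$ lie in the same one-dimensional analytic line in the manifold of decaying solutions, and differ by a scalar factor $c_j(\lambda)$ which is what we must identify.

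Next I would determine $c_j(\lambda)$ by matching asymptotics at the corresponding spatial infinity. The key computation is that since $\bar\rho(x)$ tends exponentially to $\rho_\spm$, we have
\[
y(x)=\rho_\sp x+\Delta_\sp+o(1)\ \ (x\to+\infty),\qquad y(x)=\rho_\sm x-\Delta_\sm+o(1)\ \ (x\to-\infty),
\]
and $T(x)\to T_\spm$. Hence for $j=1,2$,
\[
\widetilde W_j(x;\lambda)\sim T_\sp U_j\,\me^{\nu_j(\rho_\sp x+\Delta_\sp)} =\me^{\nu_j\Delta_\sp}\,V_j\,\me^{\mu_j x},
\]
where $\mu_j=\rho_\sp\nu_j$ is the corresponding eigenvalue of $\mat{A}_\sp$ and $V_j\defequals T_\sp U_j$ is the corresponding eigenvector. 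Because $T_\sp$ is $\lambda$-independent, Lemma \ref{katolem} (invariance of Kato's ODE) guarantees that this $V_j$ is precisely the Kato-normalized Eulerian initializing vector used in defining $W_j^E$, which has leading asymptotic $V_j\me^{\mu_j x}$. Thus $\widetilde W_j=\me^{\nu_j\Delta_\sp}W_j^E$ for $j=1,2$, and the analogous computation at $-\infty$ gives $\widetilde W_3=\me^{-\nu_3\Delta_\sm}W_3^E$.

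Now assemble the Evans functions. Since $y(0)=0$, the identity $\widetilde W_j(0)=T(0)Z_j(0)$ gives
\[
\det(\widetilde W_1,\widetilde W_2,\widetilde W_3)\big|_{x=0}=\det T(0)\cdot D_\mathrm{L}(\lambda),
\]
while the scalar identifications above yield
\[
\det(\widetilde W_1,\widetilde W_2,\widetilde W_3)\big|_{x=0}=\me^{(\nu_1+\nu_2)\Delta_\sp-\nu_3\Delta_\sm}\,D_\mathrm{E}(\lambda).
\]
Equating and solving for $D_\mathrm{E}$ produces the relation \eqref{phi}, with the exponent being the sum-of-eigenvalue expression stated in the lemma (modulo the identification $\nu_\spm$-as-eigenvalues-of-$\mat{A}_\spm$ versus their Lagrangian counterparts, which differ by a factor $\rho_\spm$). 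For the large-$|\lambda|$ asymptotic, I would analyze the characteristic polynomial of $\mat{A}_\sp$ (Eulerian limiting matrix): the trace is $-\lambda/u_\sp+O(1)$, and a standard dominant-balance argument separates the spectrum into one \emph{convective} mode $\mu\sim -\lambda/u_\sp$ and two \emph{viscous} modes $\mu\sim\pm\sqrt{\rho_\sp\lambda}$. For $\Re\lambda>0$ the stable eigenvalues are the convective mode plus the negative viscous root, so $\nu_\sp=-\lambda/u_\sp+O(\sqrt{\lambda})$. At $-\infty$ only the viscous root is unstable, giving $\nu_\sm=O(\sqrt{\lambda})$. Combining these with the normalization $m=1$ and absorbing the convective factor into the stated scaling yields the claimed asymptotic $\nu_\sp\Delta_\sp-\nu_\sm\Delta_\sm=-\lambda\Delta_\sp+O(\lambda^{1/2})$.

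The main technical obstacle is not any one step individually but the careful bookkeeping of signs and scalings across three compositions: the independent-variable change $y=y(x)$ (which shifts eigenvalues by $\rho_\spm$), the dependent-variable conjugation by $T(x)$ (which at the Kato level is handled cleanly by Lemma \ref{katolem}), and the finite shift $\pm\Delta_\spm$ in the asymptotic of $y(x)$ (which is exactly what gets promoted into the multiplicative $\me^{\nu_j\Delta_\spm}$ mismatch). A secondary point requiring care is making the leading-order asymptotic matching rigorous: the subleading $o(1)$ in $y(x)=\rho_\spm x\pm\Delta_\spm+o(1)$ must produce only a $(1+o(1))$ multiplicative correction in $\me^{\nu_j y(x)}$, which is automatic once the $o(1)$ decays faster than $1/\nu_j$, i.e., under the exponential decay of $\bar\rho-\rho_\spm$ guaranteed by the gap-Lemma framework of \cite{GZ}.
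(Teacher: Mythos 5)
Your argument is essentially the paper's own proof: both conjugate the Kato-normalized Lagrangian basis through the $\lambda$-independent transformations $T(x)$ and $y(x)$, invoke Lemma \ref{katolem} to preserve the initialization at infinity, read off the per-mode mismatch $\me^{\nu_j \Delta_\spm}$ from $y(x)-\bar\rho(\pm\infty)\,x \to \pm\Delta_\spm$, and obtain the large-$|\lambda|$ asymptotics of $\nu_\spm$ from the characteristic polynomials of the limiting matrices. The one item to double-check is the final solve for $D_\mathrm{E}$: taken literally, your two displayed determinant identities give the exponent in \eqref{phi} with the opposite sign, a bookkeeping ambiguity (which of the two ratios plays the role of $C(\lambda)$) that is equally present in the paper's own write-up and does not affect the essential $\me^{O(\lambda)}$-discrepancy conclusion.
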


\begin{proof}
For $T$ as in equation \eqref{depcon}, we have evidently
\begin{align*}
D_\mathrm{E}(\lambda)	&=\det(W_1,W_2,W_3)|_{x=0}  \\
&=\det(T\hat Z_1,T\hat Z_2,T\hat Z_3)|_{y=0} \\
&=\det T(0)\det(\hat Z_1,\hat Z_2,\hat Z_3)|_{y=0}\\
& =C(\lambda)\det T(0)D_\mathrm{L}(\lambda)\,,
\end{align*}
where $C(\lambda)$ is the product of the ratios between bases $\hat Z_j=T^{-1}W_j$ of stable and unstable manifolds
and the basis elements $Z_j\sim \me^{\nu_j y} U_j$ prescribed in the definition of the Lagrangian Evans function,
or, equivalently, of basis elements $W_j\sim \me^{\mu_j x}V_j$ and $TZ_j$.

By Lemma \ref{katolem}, $ T Z_j \sim \me^{\nu_j y(x)} V_j, $
whereas, by equation \eqref{AB}, $\mu_j = (\dif y/\dif x)|_{\pm \infty}\nu_j$.
Thus, the ratio $ |TZ_j|/|W_j|$ is given by 
$$
\exp\big(\nu_j \lim_{x\to \pm \infty}(y(x)- x (\dif y/\dif x))\big)\,.
$$
Using $y(0)=0$, we obtain $y(x)=\int_0^x (\dif y/\dif x)(z)\,\dif z$, hence 
\[
y(x)-x (\dif y/\dif x)= \int_0^x \big((\dif y/\dif x)(z)-(\dif y/\dif x)(x)\big)dz.
\]
Substituting $\dif y/\dif x=\bar \rho(x)$, and taking the limit as $x\to \pm \infty$, we obtain the result.
The asymptotics for $\nu_\spm$ are readily obtained by spectral perturbation analysis, or by asymptotic analysis
of the characteristic polynomials of $\mat{B}_\spm $, in the limit as $|\lambda|\to \infty$.
\end{proof}
%

For the chosen pressure law and parameters, $\bar\rho$ is increasing, hence $\Delta_\spm <0$.
Moreover, $\Delta_\sp<0$ and $\Delta_\sm>0$, hence 
$D_\mathrm{E}(\lambda)/ D_\mathrm{L}(\lambda) \sim \me^{ \lambda |\Delta_+|}$, explaining the large difference
in winding between images of semicircular contours of large radius under $D_{\mathrm{E}}$ vs. $D_{\mathrm{L}}$.

\subsection{High-frequency asymptotics}
Lemma \ref{evrel} and the conclusion above explain the large difference between Eulerian and Lagrangian Evans functions, 
by a factor of order $\me^{C\lambda}$ as $|\lambda|\to\infty$.
However, they do not explain the ``goodness'' of the Lagrangian version. 
For this, we appeal to large-$\lambda$ asymptotics for the individual Evans function,
as carried out for the more difficult nonisentropic case in \cite{HLyZ}*{Prop. 4.2}, which shows that
\be\label{good}
D_\mathrm{L}(\lambda)\sim \me^{C\sqrt{\lambda}}
\quad
\text{as}
\quad
|\lambda|\to \infty\,.
\ee
A similar analysis carried out for the Eulerian Evans function gives 
\be\label{bad}
D_\mathrm{E}(\lambda)\sim \me^{C_2\lambda }
\quad
\text{as}
\quad
|\lambda|\to \infty\,,
\ee
in agreement with Lemma \ref{evrel}.
This verifies rigorously the observed
phenomenon that the Lagrangian Evans function indeed has much better behavior than the Eulerian version.

More important for our purposes is the asymptotic argument behind the result, which shows that, to leading order
as $|\lambda|\to \infty$, the basis elements $Z_j$ ``track'' the eigendirections of the frozen-coefficient matrix
$\mat{B}(y,\lambda)$ as $y$ is varied.  Thus, their magnitudes $r_j$ obey the simple scalar equations
$\dif r_j/\dif y= \nu_j(y) r_j$, where $\nu_j(y)$ are the eigenvalues of the frozen-coefficient matrix $\mat{A}(y,\lambda)$,
which, taking into account the prescribed asymptotics $r_j\sim \me^{\nu_j y}$ as $y\to \pm \infty$
results in a magnitude at $y=0$ of order $\me^{\int_{\pm \infty}^0 (\nu_j(y)- \nu_j(\pm\infty) dy}$ for each mode.

Among the $\nu_j$, there are two harmless ``parabolic'' modes $\mu_j \sim \sqrt{\lambda/\bar \tau}$,
giving combined contribution $\sim e^{C\sqrt{\lambda}}$.
The third, potentially harmful, mode is the ``hyperbolic'' mode associated with the density equation
$\lambda \tau + \tau'= u'$, whose principal part $\lambda \tau= -\tau'$, leads to the eigenvalue
$$
\nu_*(y) = -\lambda + O(\lambda^{1/2}).
$$
The crucial feature of this eigenvalue is that it is {\it to leading order constant in $y$.}
Thus, the associated mode $Z_*$
contributes to the Evans function magnitude 
$\me^{\int_{\pm \infty}^0 (\nu_*(y)- \nu_*(\pm\infty) dy}\sim \me^{C\sqrt{\lambda}}$ as $|\lambda|\to \infty$
of the same asymptotic order as the parabolic modes.

For the Eulerian Evans function, on the other hand, the corresponding hyperbolic mode $W_*$ satisfies to leading order
the scalar ODE
$\lambda \rho + \bar u \rho=0$, with an associated eigenvalue 
$$
\mu_*(x)= -\lambda/\bar u(x) + O(\sqrt{\lambda})= -(\bar \rho(x)/m)\lambda + O(\sqrt{\lambda})
$$
that is {\it variable coefficient to leading order} in $x$.
This leads to a factor $\sim \me^{C_1\lambda}$ in the Eulerian Evans function, and the resulting $\me^{C_1\lambda}$
asymptotics cited above.
	

\section{Pseudo-Lagrangian coordinates: multiple space dimensions}\label{sec:pseudo}
We turn now to the multidimensional case. We consider the 
isentropic Navier--Stokes equations in space dimension $d=2$. In Eulerian coordinates, the system takes the form,
in Eulerian coordinates:
\begin{subequations}\label{eq:dns}
\beq
\partial_t\rho+\dv(\rho\vec{v})=0\,,
\eeq
\beq
\partial_t(\rho\vec{v})+\dv(\rho \vec{v}\otimes\vec{v})+\grad p=\mu\Delta\vec{v}+(\mu+\eta)\grad\dv\vec{v}\,,
\eeq
\end{subequations}
where $\rho$ is density, $\vec{v}=(v_1,v_2)$ velocity, $p$ pressure, related to density by equation \eqref{eq:pressure},
and constants $\mu$ and $\eta$ are coefficients of first and second viscosity \cites{Ba,HLyZ3}.
Linearizing about a steady planar profile $(\rho, \vec{v})=(\bar \rho, \overline{\vec{v}})(x_1)$ varying 
in the $x_1$ direction only,
without loss of generality $\bar v_2\equiv 0$, we obtain the eigenvalue equations
\begin{subequations}\label{eq:dnseval}
\beq
	\lambda \rho+ \dv(\bar \rho\vec{v}+ \rho \overline{\vec{v}})=0\,,
\eeq
\begin{multline}
	\lambda (\bar \rho\vec{v}+ \rho \overline{\vec{v}}) 
	\dv(\rho\bar{\vec{v}}\otimes\bar{\vec{v}}+\bar\rho\vec{v}\otimes \bar{\vec{v}}+\bar{\rho}\bar{\vec{v}}\otimes\vec{v})
	+ \grad p(\bar\rho)
	\\=\mu\Delta\vec{v}+(\mu+\eta)\grad\dv\vec{v}\,.
\end{multline}
\end{subequations}
Taking the Fourier transform in $x_2$, we obtain a family of ordinary differential equations in $x_1$ parametrized by the Fourier frequency $\xi$.
Expressing this as a first-order system, we may define an Evans function 
\be\label{multievans}
D_\mathrm{E}(\lam,\xi)
\ee
similarly as in the one-dimensional case, with zeros corresponding to generalized eigenmodes $\me^{\mi\xi x_2}w(x_1)$,
$w$ decaying at infinity, associated with eigenvalue $\lambda$. 
See \cite{HLyZ3,BHLZ2} for further details.

This Evans function has equally poor behavior as the one-dimensional version; indeed, for $\xi=0$, the multidimensional Eulerian 
Evans function reduces to (a nonvanishing multiple of) the one-dimensional one.
However, in contrast to the one-dimensional case, a useful Lagrangian version of the Evans function does not
seem to be available; Pogan, Yao, \& Zumbrun \cite{PYZ} discuss this issue in some depth.

\subsection{Pseudo-Lagrangian Coordinates}
To resolve this problem, making possible practical multidimensional Evans function computations, we introduce instead
a new {\it pseudo-Lagrangian} formulation of the Evans function, based on the Eulerian version, but sharing the
good properties of the one-dimensional Lagrangian Evans function.
Namely, dropping the subscript on $x_1$, and writing the first-order Evans system as
$$
\dif W/\dif x=\mat{A}(x;\lambda, \xi) W,\,
$$
we introduce $\dif Y/\dif y= \mat{B}(y; \lambda, \xi)Y,$ where $\mat{B}$ is defined by $\mat{B}(y(x);\lambda,\xi)=(\dif x/\dif y)\mat{A}(x;\lambda,\xi)$,
and denote the resulting Evans function by $D_\mathrm{pL}(\lam,\xi)$.

Partial justification for this choice is given by the following straightforward result.
Abusing notation somewhat, let $D_\mathrm{pL}(\lambda)$ denote the one-dimensional version of the pseudo-Lagrangian Evans function,
obtained from the Eulerian Evans system by the change of dependent variable $\dif y/\dif x=\bar\rho(x)$ as was done in the multidimensional case.

\begin{proposition}
The one-dimensional pseudo-Lagrangian Evans function $D_\mathrm{pL}(\lambda)$
agrees with (i.e., is a constant multiple) of
the one-dimensional Lagrangian Evans function $D_\mathrm{L}(\lambda)$.
\end{proposition}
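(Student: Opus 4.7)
The plan is to observe that the pseudo-Lagrangian and Lagrangian systems are connected by a purely $\lambda$-independent change of dependent variables, so that Lemma \ref{katolem}, together with standard uniqueness of Kato-initialized bases, will force the two Evans functions to agree up to a $\lambda$-independent factor equal to the Jacobian of the change of coordinates at $y=0$.

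First I would rewrite both systems in the common independent variable $y$; note that the pseudo-Lagrangian and Lagrangian constructions share the rescaling $\dif y/\dif x=\bar\rho(x)$. The pseudo-Lagrangian system is $\dif W/\dif y=\bar\rho(x(y))^{-1}\mat{A}(x(y);\lambda)W$ with $W=(f,u)^{T}$, while the Lagrangian system reads $\dif Z/\dif y=\mat{B}(y;\lambda)Z$ with $Z=(\tau,w,w')^{T}$. By equation \eqref{AB}, the $\lambda$-independent map $W=T(x(y))Z$, with $T$ as in \eqref{depcon}, carries the Lagrangian system onto the pseudo-Lagrangian one.

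Next I would check the asymptotic initializations. Since $T(x)\to T_{\pm}$ and $\bar\rho(x)\to\rho_{\pm}$ as $x\to\pm\infty$, the limiting matrices of the two systems are related by constant conjugation by $T_{\pm}$ together with a positive scalar rescaling by $\rho_{\pm}^{-1}$, which preserves the stable/unstable projection. Lemma \ref{katolem} therefore yields $V_j=T_{\pm}U_j$ for the Kato bases, and the two systems share the limiting decay/growth rates $\nu_j(\lambda)$. Consequently, $T(x(y))Z_j(y)$ solves the pseudo-Lagrangian ODE and satisfies
\[
T(x(y))Z_j(y)\sim T_{\pm}\me^{\nu_j y}U_j=\me^{\nu_j y}V_j \quad \text{as } y\to\pm\infty,
\]
which is exactly the asymptotic prescription defining $W_j$. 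By the standard uniqueness of Kato-initialized stable/unstable basis elements, $W_j(y)=T(x(y))Z_j(y)$ for all $y$ and each $j$.

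Finally, since $y=0$ corresponds to $x=0$ under our normalization, evaluating at the matching point gives
\[
D_{\mathrm{pL}}(\lambda)=\det(W_1,W_2,W_3)\big|_{y=0}=\det T(0)\cdot\det(Z_1,Z_2,Z_3)\big|_{y=0}=\det T(0)\cdot D_{\mathrm{L}}(\lambda),
\]
with $\det T(0)$ manifestly $\lambda$-independent. The only step requiring real care is the identification $W_j=T(x(y))Z_j$ from matching Kato frames and asymptotic rates; the remainder is bookkeeping, and one could alternatively argue more abstractly that the Evans function is an invariant (up to the multiplicative Jacobian at the matching point) of the underlying first-order system under $\lambda$-independent gauge transformations.
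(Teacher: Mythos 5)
Your proposal is correct and follows essentially the same route as the paper: the paper's proof simply invokes the argument of Lemma \ref{evrel}, noting that the Lagrangian and pseudo-Lagrangian flows are conjugate by a change of dependent variables alone (with no change of independent variable), so the exponential correction factor $C(\lambda)$ from that lemma degenerates to $1$ and only $\det T(0)$ survives. Your write-up just makes explicit the two ingredients the paper leaves implicit---the matching of Kato frames via Lemma \ref{katolem} and the coincidence of the asymptotic rates $\nu_j$ once both systems are expressed in the variable $y$.
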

\begin{proof}
	This follows by the argument in the proof of Lemma \ref{evrel}, but now observing that the Lagrangian
	and pseudo-Lagrangian flows are conjugate by a change of dependent variables alone, with no change of independent
	variable.
\end{proof}

Further motivation is given by the hyperbolic $\rho$ equation of the Fourier transformed eigenvalue equation,
$$
\lambda \rho+ (\dif /\dif x)(\rho \bar v_1+ \bar \rho v_1) + \mi\xi \bar \rho v_2=0\,,
$$
which has principal part $ \lambda \rho+ \bar v_1 (\dif/\dif x)(\rho)  =0$,
or $\dif \rho/\dif x= -\lambda/\bar v_1$ as in the one-dimensional case.
Thus, $\dif \rho/\dif y= (\dif \rho/\dif x)(\dif x/\dif y)= -(\lambda/m)\rho$, with $m\equiv \bar \rho \bar v_1$ constant,
similarly as in the one-dimensional case.
Thus, the corresponding asymptotic eigenvalue $\nu_*(\lambda, \xi)$ of the frozen-coefficient matrix
$\mat{B}(y;\lam,\xi)$ is, to leading order, independent of $y$, and we obtain favorable large-$|\lambda|$ asymptotics
also for the multidimensional version of the pseudo-Lagrangian Evans function.

%
%


\subsection{Numerical performance}\label{ssec:num2}
As in one dimension, we find that the image of a contour under evaluation of the multidimensional Evans function in Eulerian coordinates raps excessively around the origin before unwinding again and varies in modulus significantly more than when using pseudo-Lagrangian coordinates. For example, when $\gamma = 5/3$, $u_\sp= 0.06$, $\xi = 1$,
and we compute the Evans function on a contour like that shown in \ref{fig:compare}(b), but with inner radius set to $r=1e-3$ and outer radius to $R =30$, we find that in Eulerian coordinates it takes 1344 points on the pre-image contour in order for the image contour to vary in relative distance no more than 0.2, whereas for pseudo-Lagrangian coordinates, 212 pre-image points suffice. 
As seen in Figure \ref{fig:eulerian_evans}, the Evans function computed in Eulerian coordinates varies in modulus over three times more orders of magnitude then in pseudo-Lagrangian coordinates. 
\begin{figure}[ht!]
 \begin{center}
$
\begin{array}{lcr}
\text{(a)} \includegraphics[scale=0.23]{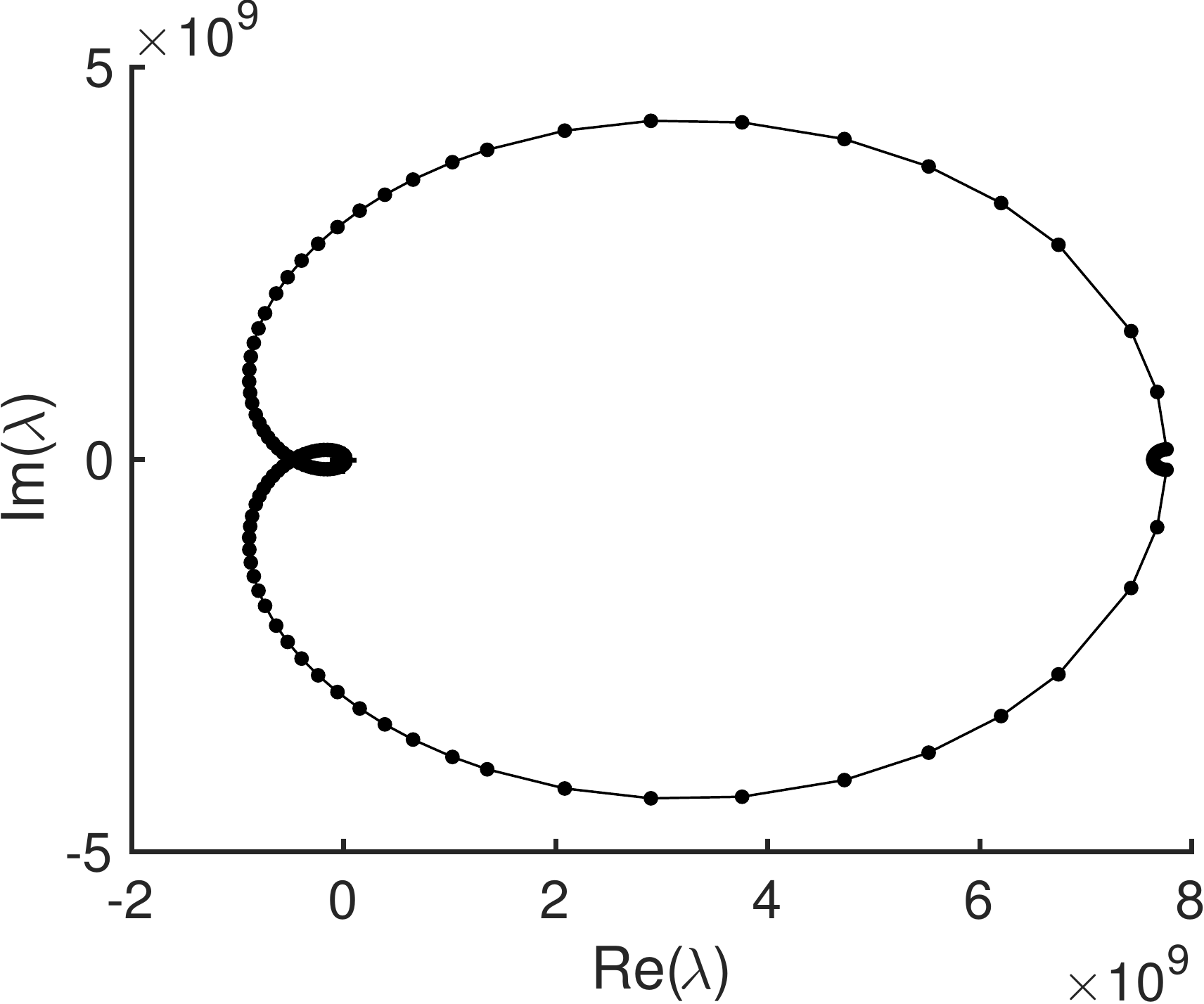} & \text{(b)} \includegraphics[scale=0.23]{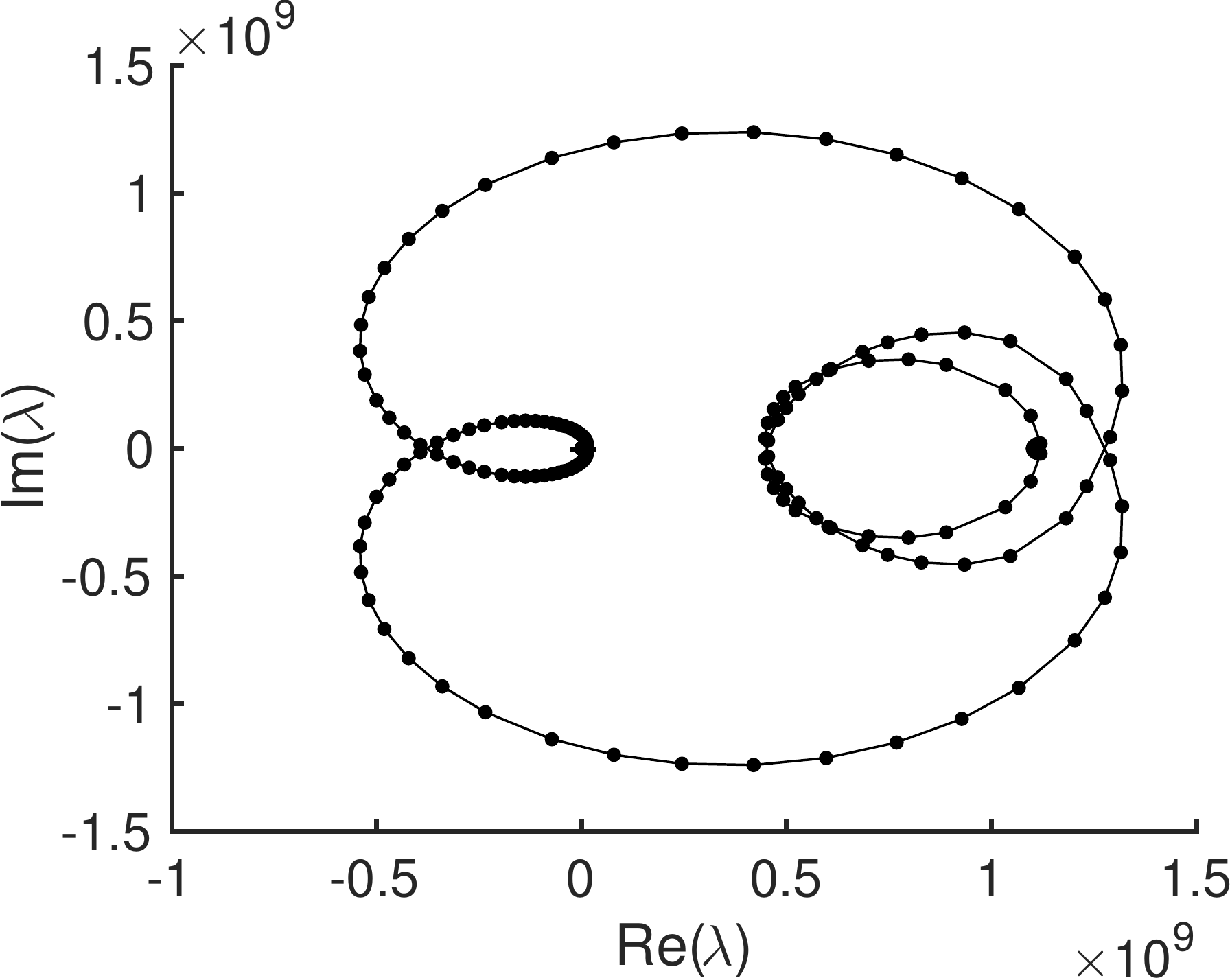}   & \text{(c)} \includegraphics[scale=0.23]{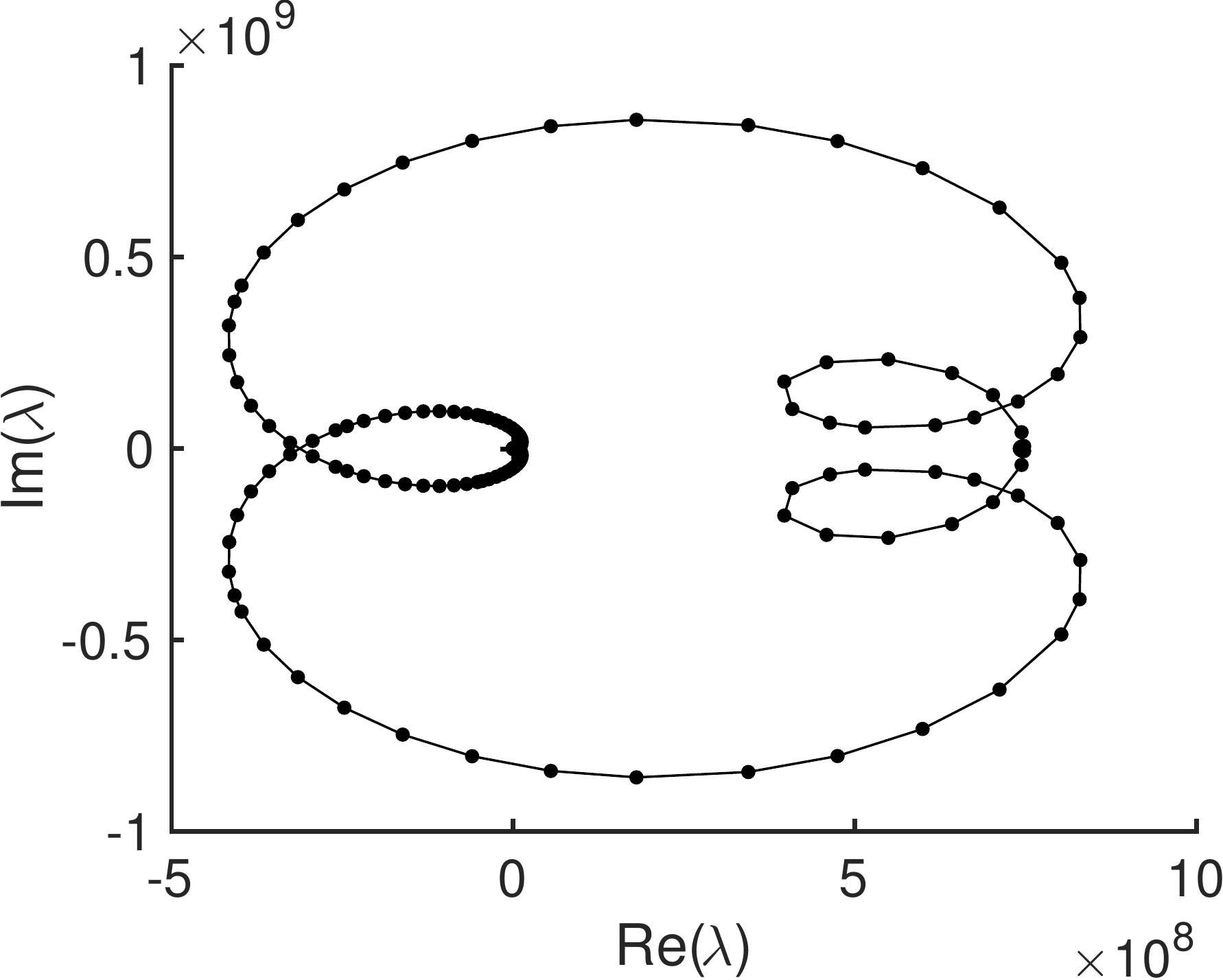} \\
 \text{(d)} \includegraphics[scale=0.23]{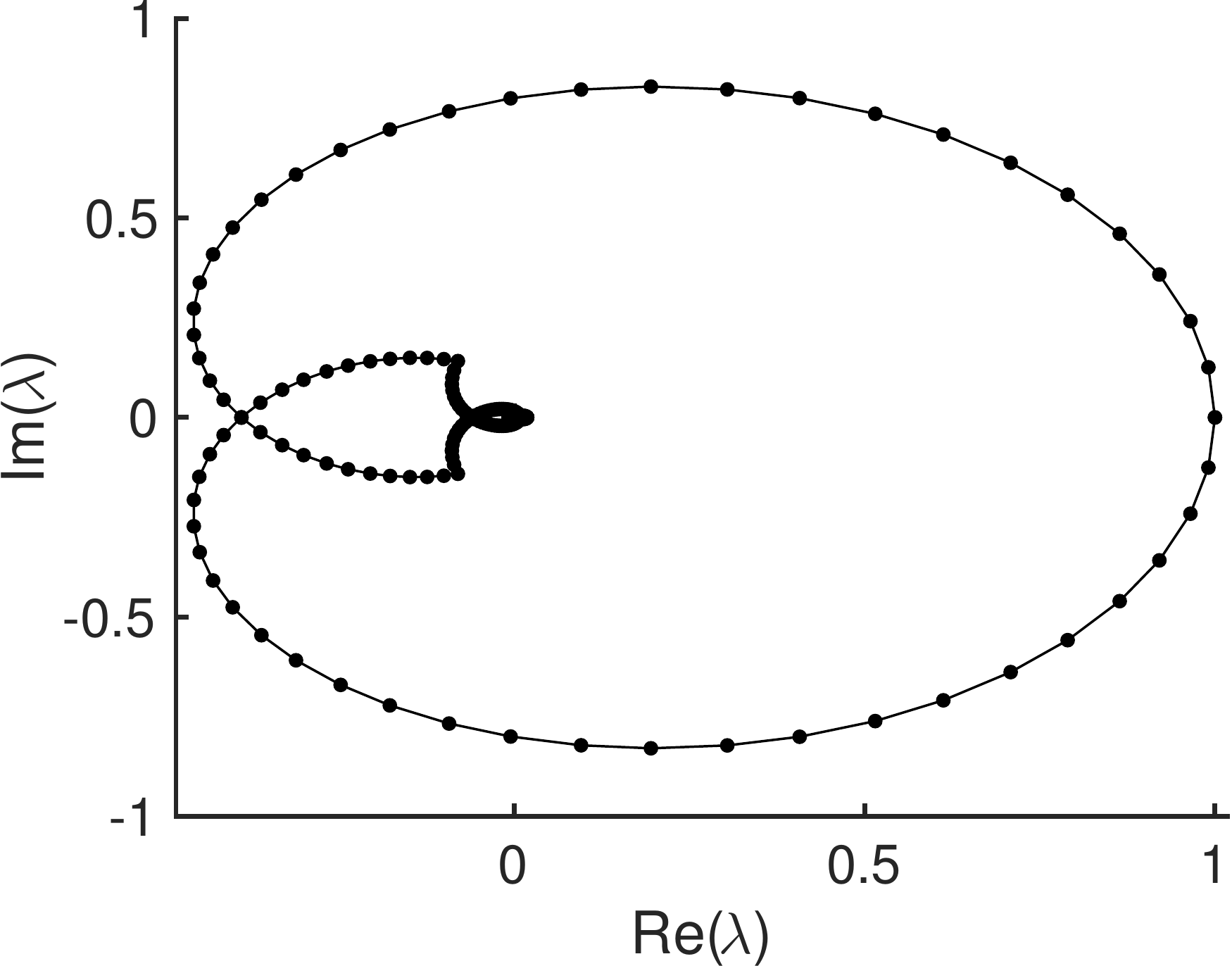} & \text{(e)} \includegraphics[scale=0.23]{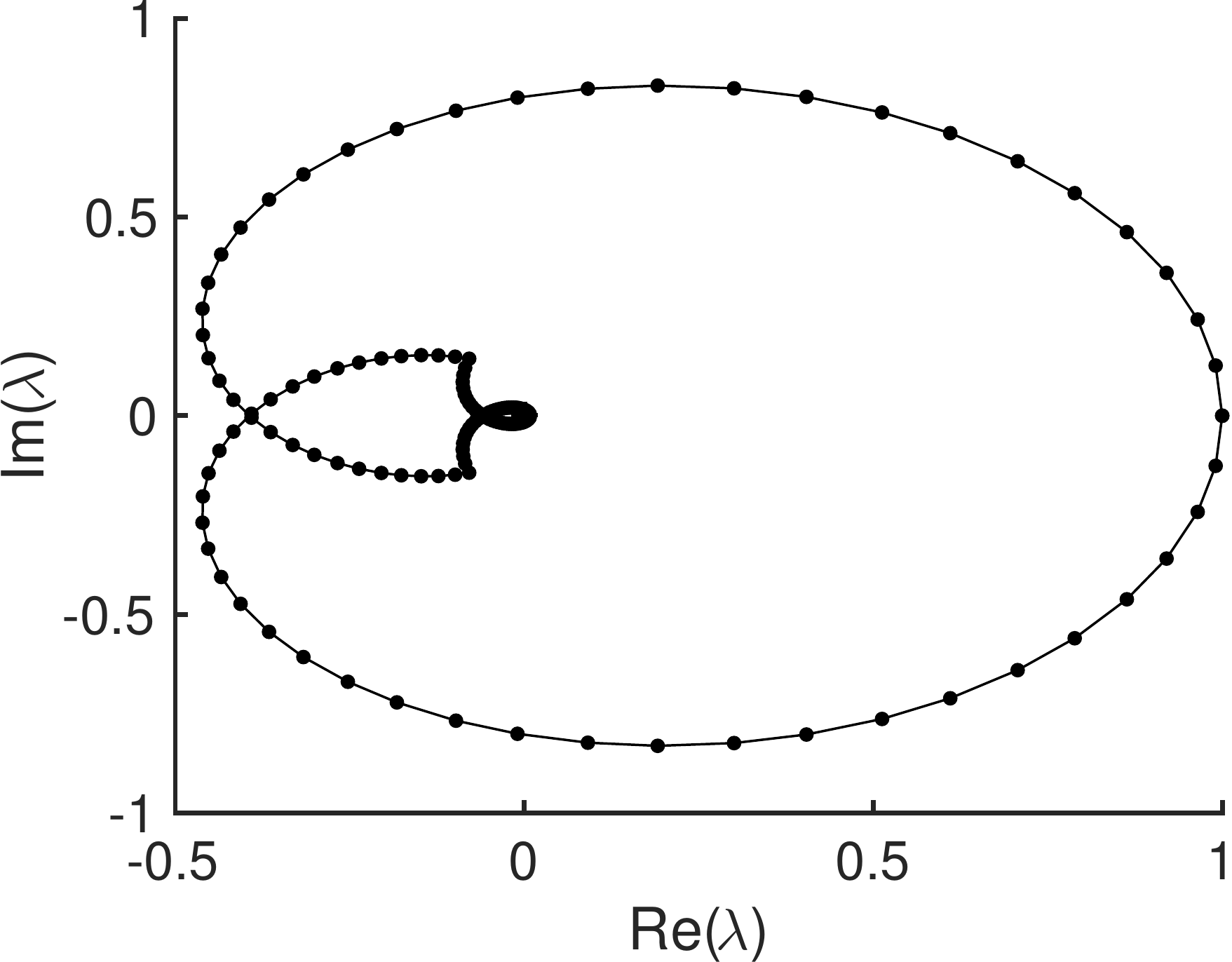} & \text{(f)}  \includegraphics[scale=0.23]{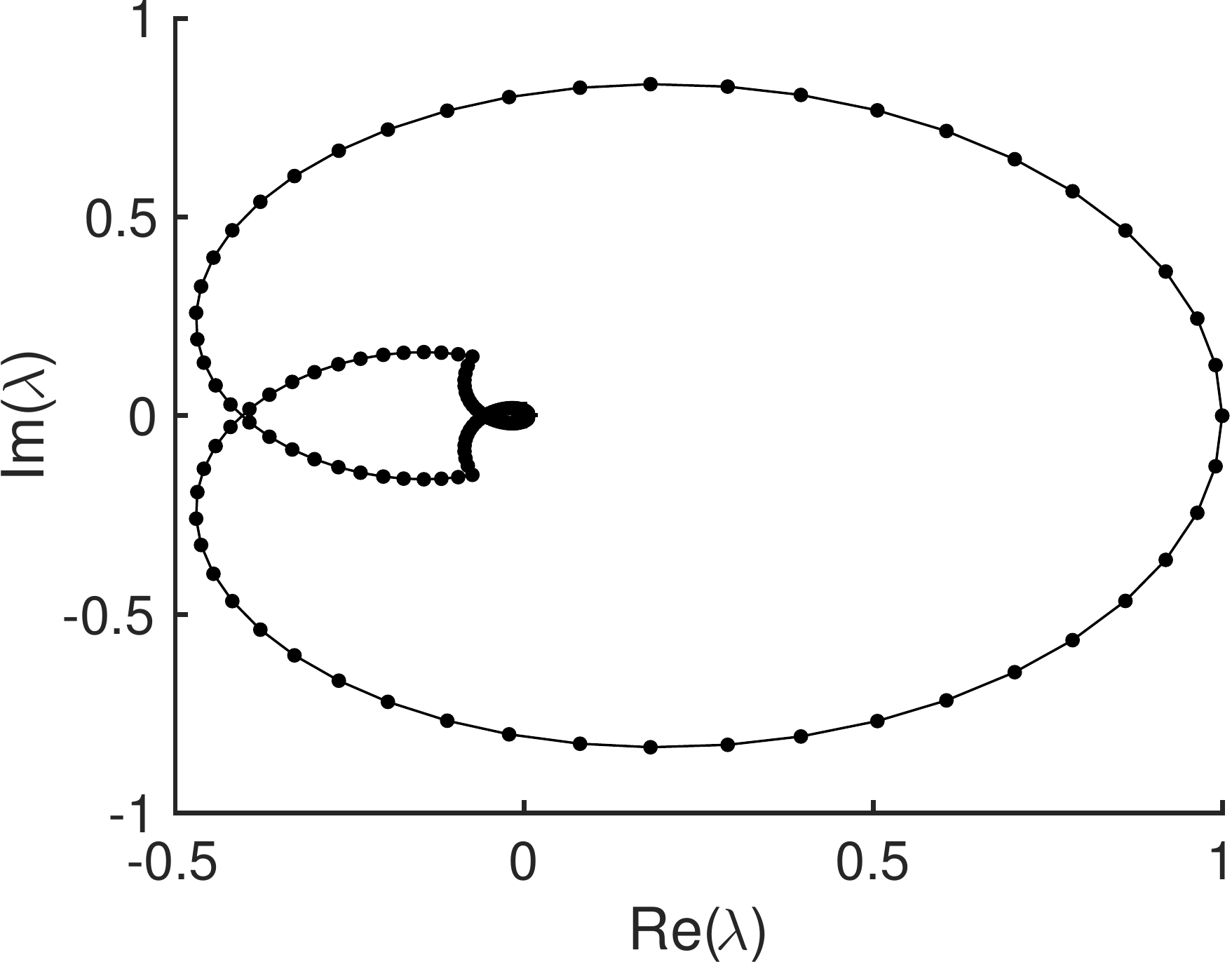} 
\end{array}
$
\end{center}
\caption{Plot of the Evans function for two-dimensional isentropic gas in Eulerian coordinates (top row) and pseudo-Lagrangian coordinates (bottom row) when $\gamma = 5/3$ and $u_\sp = 0.06$. In all figures, the Evans function is evaluated on a semi-annulus contour with inner radius $r=10^{-3}$ and outer radius $R = 30$. The Fourier parameter is $\xi=0$ in (a) and (c), $\xi = 0.5$ in (b) and (e), and $\xi = 1$ in (c) and (f).   }
\label{fig:eulerian_evans}
\end{figure}
An even starker contrast occurs when the Evans function is computed on a contour with outer radius $R = 90$ and inner radius $r = 1e-3$, but with $\gamma = 5/3$, $\xi = 0$, and $u_\sp = 0.001$. The Evans function in Eulerian coordinates takes 4.06 days to compute, varies over 225 orders of magnitude, and requires 12,708 points in order for the image contour to vary in relative distance no more than 0.2, whereas the Evans function in pseudo-Lagrangian coordinates takes 20.4 minutes to compute, varies over 12 orders of magnitude, and requires 740 points.
Furthermore, in pseudo-Lagrangian coordinates, the Evans function for multidimensional isentropic gas has small variation as $\xi$ varies. This is  shown in Figure \ref{fig94}.
\begin{figure}[ht!]
 \begin{center}
$
\begin{array}{lcr}
\includegraphics[scale=0.5]{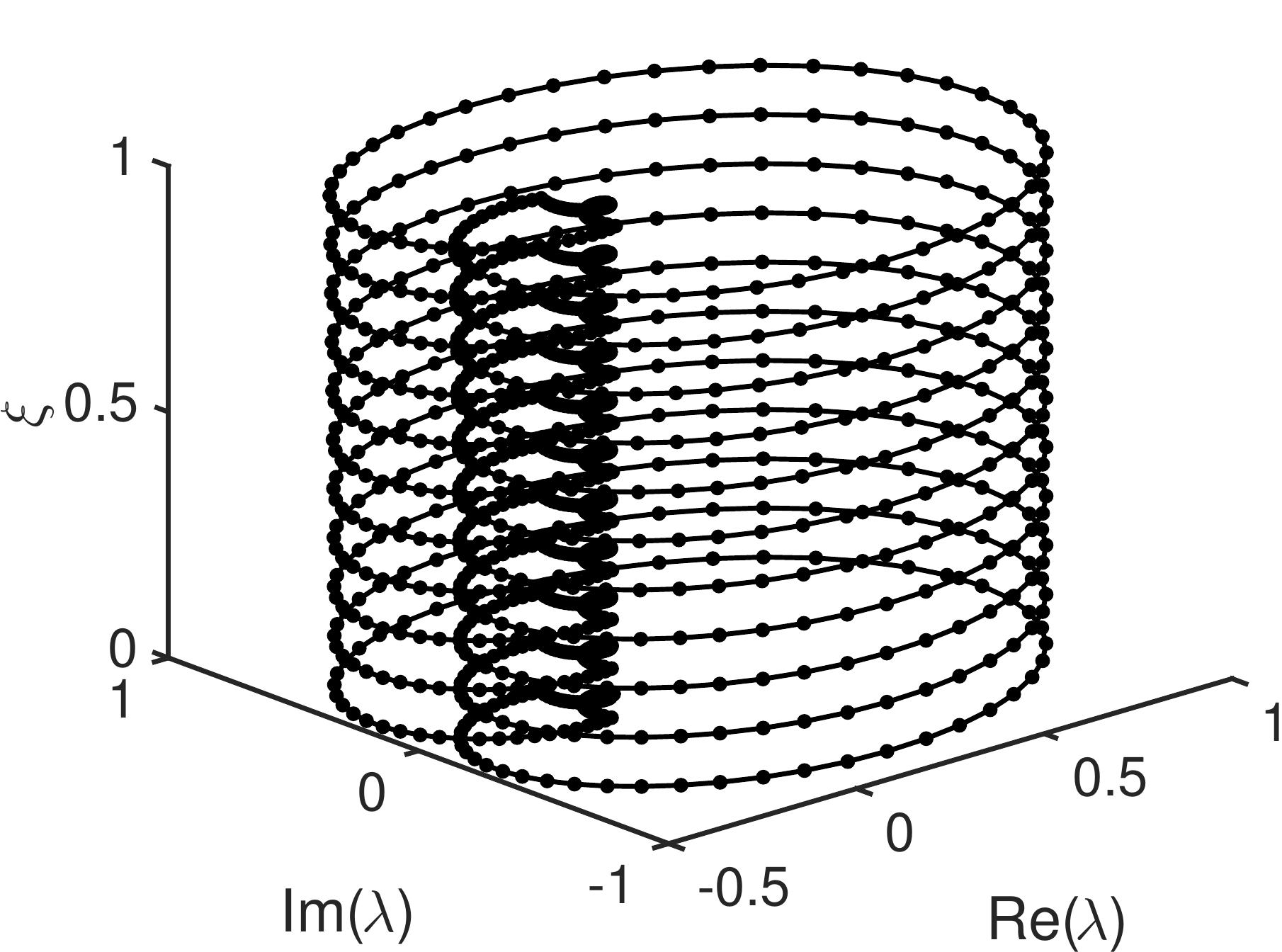}
\end{array}
$
\end{center}
\caption{Plot of the Evans function for two-dimensional isentropic gas in pseudo-Lagrangian coordinates when $\gamma = 5/3$, $u_\sp = 0.06$, and $\xi$ varying between $0$ and $1$ by $0.1$.  }
\label{fig94}
\end{figure}
Finally, differences in cost (in terms of the number of evaluations required and the computing time) are compiled in Table \ref{tab:cost}.
The same significant improved performance of pseudo-Lagrangian coordinates manifests itself in the full gas system as well \cite{HLyZ3}. One other advantage of pseudo-Lagrangian coordinates for isentropic gas is that it requires the traveling wave profile in Lagrangian coordinates and not Eulerian coordinates. For the examples featured in Figure \ref{fig:eulerian_evans}, we had to use continuation to solve the profile in Eulerian coordinates as $u_\sp$ decreased, and eventually solve it as a scalar system using a stiff ODE solver. On the other hand, in Lagrangian coordinates continuation was not needed to solve the profile.

%
%
%

\begin{table}[ht!]
\caption{The computational cost of the Eulerian method and pseudo-Lagrangian method.
The first two columns indicate the parameter $\tau_\sp$ and the Fourier variable $\xi$. The last four columns indicate the number of points and computation time it took to compute the Evans function on a contour of radius $R = 90$ with an adaptive Evans-function evaluator which requires that the relative error between points in the image contour be no greater than 0.2. In the last four columns a $p$ represents the number of points on which the contour is computed and a $t$ represent the computation time. The subscripts $\mathrm{E}$ and $\mathrm{pL}$ stand respectively for the Euler method and the pseudo-Lagrange method.}
\label{tab:cost}
\begin{tabular}{|c|c||c|c||c|c|}
\hline
$\tau_\sp$& $\xi$ & $p_\mathrm{E}$ & $t_\mathrm{E}$ & $p_\mathrm{pL}$ & $t_\mathrm{pL}$ \\
\hline
\hline
0.2733&    0&  238& 579.9&  112&  396\\ 
\hline
0.2733&  0.3&  256& 466.9&  138& 368.2\\ 
\hline
0.2733&  0.6&  240& 458.1&  124& 357.7\\ 
\hline
0.22&    0&  348& 983.7&  120& 386.8\\ 
\hline
0.22&  0.3&  376& 785.7&  150& 353.1\\ 
\hline
0.22&  0.6&  356& 775.1&  136&  358\\ 
\hline
0.1667&    0&  396& 1163&  184& 530.1\\ 
\hline
0.1667&  0.3&  440& 837.2&  218& 504.1\\ 
\hline
0.1667&  0.6&  422& 862.8&  200& 498.4\\ 
\hline
0.1133&    0&  676& 1829&  206& 555.3\\ 
\hline
0.1133&  0.3&  702& 1612&  248& 479.1\\ 
\hline
0.1133&  0.6&  676& 1641&  226& 486.1\\ 
\hline
0.06&    0&  948& 2916&  340& 814.6\\ 
\hline
0.06&  0.3& 1012& 2566&  392& 758.7\\ 
\hline
0.06&  0.6&  984& 2577&  378& 782.1\\
\hline
0.001& 0 & 12708 & 3.51e5 & 740 & 1224 \\
\hline
\end{tabular}
 \end{table}

\section{Conclusions}\label{sec:conclusions}
Our results illustrate that coordinate choices, at the level of physical models, can have substantial impacts on the viability of a given Evans-function computation. Coupling this with our companion results \cite{BHLZ2} on the practical role of coordinate choices in the construction of the first-order eigenvalue equation, we see a simple takeaway message: coordinate choices matter. While viscous shock profiles in one space dimension can equally well be described in Eulerian and Lagrangian coordinates, the two Evans functions arising from the two models behave substantially differently, and these differences affect the viability of computations for spectral stability.

The import of coordinate choices goes far beyond minimizing winding for attractive pictures of Evans-function output. For physical systems with many parameters and/or for multidimensional problems, it is essential to minimize the number of function evaluations to have a chance to properly explore parameter and frequency space. Indeed, as noted above, we expect pseudo-Lagrangian coordinates to be necessary for any kind of computational Evans-function analysis of multidimensional problems in magnetohydrodynamics and in detonation theory. More generally, this phenomenon will be present in general composite type hyperbolic--parabolic systems and perhaps in other settings as well. While the phenomenon is not present in the 2nd-order strictly parabolic case\footnote{This explains why issues such as the above have not appeared in the extensive Evans-function literature associated with traveling-wave solutions of reaction-diffusion equations.}, it does suggest an interesting and important open problem. That is, given a physical system, which representative of the Evans function is the ``best'' for computational purposes? Since the stability calculations generally involve winding numbers, one measure of ``best'' might be in terms of minimizing winding. 
Certainly the example of gas dynamics presented here suggests that some kind of answer to the above question is required if numerical Evans-function calculations are going to be part of a general purpose, push-button stability calculator. Thus, in addition to recent developments of Evans-function approximations in numerical proofs of stability \cites{B,BZ}, we see optimizing the computed Evans functions as a central issue in the future development of computational Evans-function techniques.

%
%
%
%
%
%
%
%

%


\begin{bibdiv}
\begin{biblist}
\bib{AGJ}{article}{
   author={Alexander, J.},
   author={Gardner, R.},
   author={Jones, C.},
   title={A topological invariant arising in the stability analysis of
   travelling waves},
   journal={J. Reine Angew. Math.},
   volume={410},
   date={1990},
   pages={167--212},
   issn={0075-4102},
}
\bib{AS}{article}{
   author={Alexander, J. C.},
   author={Sachs, R.},
   title={Linear instability of solitary waves of a Boussinesq-type
   equation: a computer assisted computation},
   journal={Nonlinear World},
   volume={2},
   date={1995},
   number={4},
   pages={471--507},
   issn={0942-5608},
   review={\MR{1360872}},
}
\bib{AB}{article}{
   author={Allen, L.},
   author={Bridges, T. J.},
   title={Numerical exterior algebra and the compound matrix method},
   journal={Numer. Math.},
   volume={92},
   date={2002},
   number={2},
   pages={197--232},
   issn={0029-599X},
}
\bib{B}{article}{
   author={Barker, B.},
   title={Numerical proof of stability of roll waves in the small-amplitude
   limit for inclined thin film flow},
   journal={J. Differential Equations},
   volume={257},
   date={2014},
   number={8},
   pages={2950--2983},
   issn={0022-0396},
}
	
\bib{BHLRZ}{article}{
   author={Barker, B.},
   author={Humpherys, J.},
   author={Lafitte, O.},
   author={Rudd, K.},
   author={Zumbrun, K.},
   title={Stability of isentropic Navier-Stokes shocks},
   journal={Appl. Math. Lett.},
   volume={21},
   date={2008},
   number={7},
   pages={742--747},
   issn={0893-9659},
}
		
\bib{BHRZ}{article}{
   author={Barker, B.},
   author={Humpherys, J.},
   author={Rudd, K.},
   author={Zumbrun, K.},
   title={Stability of viscous shocks in isentropic gas dynamics},
   journal={Comm. Math. Phys.},
   volume={281},
   date={2008},
   number={1},
   pages={231--249},
   issn={0010-3616},
}

\bib{BHLZ1}{article}{
   author={Barker, B.},
   author={Humpherys, J.},
   author={Lyng, G.},
   author={Zumbrun, K.},
   title={Viscous hyperstabilization of detonation waves in one space
   dimension},
   journal={SIAM J. Appl. Math.},
   volume={75},
   date={2015},
   number={3},
   pages={885--906},
   issn={0036-1399},
}
\bib{BHLZ2}{misc}{
   author={Barker, B.},
   author={Humpherys, J.},
   author={Lyng, G.},
   author={Zumbrun, K.},
   title={Balanced flux formulations for multidimensional Evans function computations for viscous shock waves},
   note={\texttt{arXiv:1703.02099}},
   date={2017},
   }
   
\bib{STABLAB}{unpublished}{,
  Title                    = {\textsc{STABLAB}: A \textsc{MATLAB}-based numerical library for Evans function computation},
  Author                   = {Barker, B.},
  Author = {Humpherys, J.},
  Author = {Lytle, J.},
  Author = {Zumbrun, K.},
  Note                     = {https://github.com/nonlinear-waves/stablab.git},
  Date            = {2017-01-04}
} 

\bib{BHZ}{article}{
   author={Barker, B.},
   author={Humpherys, J.},
   author={Zumbrun, K.},
   title={One-dimensional stability of parallel shock layers in isentropic
   magnetohydrodynamics},
   journal={J. Differential Equations},
   volume={249},
   date={2010},
   number={9},
   pages={2175--2213},
   issn={0022-0396},
}
	
\bib{BZ}{article}{
   author={Barker, B.},
   author={Zumbrun, K.},
   title={Numerical proof of stability of viscous shock profiles},
   journal={Math. Models Methods Appl. Sci.},
   volume={26},
   date={2016},
   number={13},
   pages={2451--2469},
   issn={0218-2025},
}   
   
\bib{Ba}{book}{
   author={Batchelor, G. K.},
   title={An introduction to fluid dynamics},
   series={Cambridge Mathematical Library},
   edition={Second paperback edition},
   publisher={Cambridge University Press, Cambridge},
   date={1999},
   pages={xviii+615},
   isbn={0-521-66396-2},
}

\bib{BDG}{article}{
   author={Bridges, T. J.},
   author={Derks, G.},
   author={Gottwald, G.},
   title={Stability and instability of solitary waves of the fifth-order KdV
   equation: a numerical framework},
   journal={Phys. D},
   volume={172},
   date={2002},
   number={1-4},
   pages={190--216},
   issn={0167-2789},
}

\bib{B_PHD}{book}{
   author={Brin, L. Q.},
   title={Numerical testing of the stability of viscous shock waves},
   note={Thesis (Ph.D.)--Indiana University},
   publisher={ProQuest LLC, Ann Arbor, MI},
   date={1998},
   pages={166},
   isbn={978-0591-87851-6},
}

\bib{Br}{article}{
   author={Brin, L. Q.},
   title={Numerical testing of the stability of viscous shock waves},
   journal={Math. Comp.},
   volume={70},
   date={2001},
   number={235},
   pages={1071--1088},
   issn={0025-5718},
}
		
\bib{BrZ}{article}{
   author={Brin, L. Q.},
   author={Zumbrun, K.},
   title={Analytically varying eigenvectors and the stability of viscous
   shock waves},
   note={Seventh Workshop on Partial Differential Equations, Part I (Rio de
   Janeiro, 2001)},
   journal={Mat. Contemp.},
   volume={22},
   date={2002},
   pages={19--32},
   issn={0103-9059},
}

\bib{CF}{book}{
   author={Courant, R.},
   author={Friedrichs, K. O.},
   title={Supersonic flow and shock waves},
   note={Reprinting of the 1948 original;
   Applied Mathematical Sciences, Vol. 21},
   publisher={Springer-Verlag, New York-Heidelberg},
   date={1976},
   pages={xvi+464},
}
\bib{GZ}{article}{
   author={Gardner, R. A.},
   author={Zumbrun, K.},
   title={The gap lemma and geometric criteria for instability of viscous
   shock profiles},
   journal={Comm. Pure Appl. Math.},
   volume={51},
   date={1998},
   number={7},
   pages={797--855},
   issn={0010-3640},
}

\bib{HHLZ}{article}{
   author={Hendricks, J.},
   author={Humpherys, J.},
   author={Lyng, G.},
   author={Zumbrun, K.},
   title={Stability of viscous weak detonation waves for Majda's model},
   journal={J. Dynam. Differential Equations},
   volume={27},
   date={2015},
   number={2},
   pages={237--260},
   issn={1040-7294},
}

		
\bib{HLZ}{article}{
   author={Humpherys, J.},
   author={Lafitte, O.},
   author={Zumbrun, K.},
   title={Stability of isentropic Navier-Stokes shocks in the high-Mach
   number limit},
   journal={Comm. Math. Phys.},
   volume={293},
   date={2010},
   number={1},
   pages={1--36},
   issn={0010-3616},
}

\bib{HLyZ}{article}{
   author={Humpherys, J.},
   author={Lyng, G.},
   author={Zumbrun, K.},
   title={Spectral stability of ideal-gas shock layers},
   journal={Arch. Ration. Mech. Anal.},
   volume={194},
   date={2009},
   number={3},
   pages={1029--1079},
   issn={0003-9527},
}
\bib{HLyZ2}{article}{
   author={Humpherys, J.},
   author={Lyng, G.},
   author={Zumbrun, K.},
   title={Stability of viscous detonations for Majda's model},
   journal={Phys. D},
   volume={259},
   date={2013},
   pages={63--80},
   issn={0167-2789},
}
\bib{HLyZ3}{article}{
   author={Humpherys, J.},
   author={Lyng, G.},
   author={Zumbrun, K.},
   title={Multidimensional stability of large-amplitude Navier--Stokes shocks},
   journal={Arch. Ration. Mech. Anal.},
   volume={},
   number={},
   pages={},
   note={to appear},
   date={2017}
}

\bib{HL}{article}{
   author={Humpherys, J.},
   author={Lytle, J.},
   title={Root following in Evans function computation},
   journal={SIAM J. Numer. Anal.},
   volume={53},
   date={2015},
   number={5},
   pages={2329--2346},
   issn={0036-1429},
}
\bib{HSZ}{article}{
   author={Humpherys, J.},
   author={Sandstede, B.},
   author={Zumbrun, K.},
   title={Efficient computation of analytic bases in Evans function analysis
   of large systems},
   journal={Numer. Math.},
   volume={103},
   date={2006},
   number={4},
   pages={631--642},
   issn={0029-599X},
}

\bib{HuZ2}{article}{
   author={Humpherys, J.},
   author={Zumbrun, K.},
   title={An efficient shooting algorithm for Evans function calculations in
   large systems},
   journal={Phys. D},
   volume={220},
   date={2006},
   number={2},
   pages={116--126},
   issn={0167-2789},
}

\bib{KP}{book}{
   author={Kapitula, T.},
   author={Promislow, K.},
   title={Spectral and dynamical stability of nonlinear waves},
   series={Applied Mathematical Sciences},
   volume={185},
   publisher={Springer, New York},
   date={2013},
   pages={xiv+361},
   isbn={978-1-4614-6994-0},
   isbn={978-1-4614-6995-7},
}
\bib{Kato}{book}{
   author={Kato, T.},
   title={Perturbation theory for linear operators},
   edition={2},
   note={Grundlehren der Mathematischen Wissenschaften, Band 132},
   publisher={Springer-Verlag, Berlin-New York},
   date={1976},
   pages={xxi+619},
}
\bib{LMT}{article}{
   author={Ledoux, V.},
   author={Malham, S. J. A.},
   author={Th\"ummler, V.},
   title={Grassmannian spectral shooting},
   journal={Math. Comp.},
   volume={79},
   date={2010},
   number={271},
   pages={1585--1619},
   issn={0025-5718},
}

\bib{LMNT}{article}{
   author={Ledoux, V.},
   author={Malham, S. J. A.},
   author={Niesen, J.},
   author={Th\"ummler, V.},
   title={Computing stability of multidimensional traveling waves},
   journal={SIAM J. Appl. Dyn. Syst.},
   volume={8},
   date={2009},
   number={1},
   pages={480--507},
   issn={1536-0040},
}

\bib{MaZ1}{article}{
   author={Mascia, C.},
   author={Zumbrun, K.},
   title={Pointwise Green function bounds for shock profiles of systems with
   real viscosity},
   journal={Arch. Ration. Mech. Anal.},
   volume={169},
   date={2003},
   number={3},
   pages={177--263},
   issn={0003-9527},
}
		
\bib{MaZ2}{article}{
   author={Mascia, C.},
   author={Zumbrun, K.},
   title={Stability of large-amplitude viscous shock profiles of
   hyperbolic-parabolic systems},
   journal={Arch. Ration. Mech. Anal.},
   volume={172},
   date={2004},
   number={1},
   pages={93--131},
   issn={0003-9527},
}

\bib{NR1}{article}{
   author={Ng, B. S.},
   author={Reid, W. H.},
   title={An initial value method for eigenvalue problems using compound
   matrices},
   journal={J. Comput. Phys.},
   volume={30},
   date={1979},
   number={1},
   pages={125--136},
   issn={0021-9991},
}

\bib{NR2}{article}{
   author={Ng, B. S.},
   author={Reid, W. H.},
   title={A numerical method for linear two-point boundary value problems
   using compound matrices},
   journal={J. Comput. Phys.},
   volume={33},
   date={1979},
   number={1},
   pages={70--85},
   issn={0021-9991},
}

\bib{NR3}{article}{
   author={Ng, B. S.},
   author={Reid, W. H.},
   title={On the numerical solution of the Orr-Sommerfeld problem:
   asymptotic initial conditions for shooting methods},
   journal={J. Comput. Phys.},
   volume={38},
   date={1980},
   number={3},
   pages={275--293},
   issn={0021-9991},
}
			
\bib{NR4}{article}{
   author={Ng, B. S.},
   author={Reid, W. H.},
   title={The compound matrix method for ordinary differential systems},
   journal={J. Comput. Phys.},
   volume={58},
   date={1985},
   number={2},
   pages={209--228},
   issn={0021-9991},
}
		
\bib{PYZ}{article}{
   author={Pogan, A.},
   author={Yao, J.},
   author={Zumbrun, K.},
   title={$O(2)$ Hopf bifurcation of viscous shock waves in a channel},
   journal={Phys. D},
   volume={308},
   date={2015},
   pages={59--79},
   issn={0167-2789},
}

\bib{S}{article}{
   author={Sandstede, B.},
   title={Stability of travelling waves},
   conference={
      title={Handbook of dynamical systems, Vol. 2},
   },
   book={
      publisher={North-Holland, Amsterdam},
   },
   date={2002},
   pages={983--1055},
}
\bib{Z1}{article}{
   author={Zumbrun, K.},
   title={Stability of large-amplitude shock waves of compressible
   Navier-Stokes equations},
   note={With an appendix by Helge Kristian Jenssen and Gregory Lyng},
   conference={
      title={Handbook of mathematical fluid dynamics. Vol. III},
   },
   book={
      publisher={North-Holland, Amsterdam},
   },
   date={2004},
   pages={311--533},
}
\bib{Z2}{article}{
   author={Zumbrun, K.},
   title={Planar stability criteria for viscous shock waves of systems with
   real viscosity},
   conference={
      title={Hyperbolic systems of balance laws},
   },
   book={
      series={Lecture Notes in Math.},
      volume={1911},
      publisher={Springer, Berlin},
   },
   date={2007},
   pages={229--326},
}
\end{biblist}
\end{bibdiv}
\end{document}